\documentclass[11pt]{amsart}

\usepackage{amssymb}
\usepackage{graphicx}
\usepackage[headings]{fullpage}
\usepackage{enumitem}
\usepackage{longtable}
\usepackage{hyperref}
\usepackage{url}
\usepackage{ytableau}
\usepackage{tikz}

\theoremstyle{definition}
\newtheorem{theorem}{Theorem}[section]
\newtheorem{proposition}[theorem]{Proposition}
\newtheorem{corollary}[theorem]{Corollary}
\newtheorem{lemma}[theorem]{Lemma}
\newtheorem{definition}[theorem]{Definition}
\newtheorem{remark}[theorem]{Remark}
\newtheorem{example}[theorem]{Example}
\newtheorem{problem}[theorem]{Problem}
\newtheorem*{thm}{Theorem}

\numberwithin{equation}{section}

\newcommand{\lex}{\mathrm{lex}}
\newcommand{\pa}{\operatorname{part}}
\newcommand{\sym}{\mathfrak{S}}
\renewcommand{\P}{\mathrm{P}}
\renewcommand{\geq}{\geqslant}
\renewcommand{\leq}{\leqslant}
\newcommand{\type}{\operatorname{type}}
\newcommand{\Mon}{\operatorname{Mon}}
\newcommand{\M}{\mathcal{M}}
\newcommand{\mideal}{\mathfrak{m}}
\newcommand{\Tor}{\operatorname{Tor}}


\author[BDGMNORS]{Jennifer Biermann}
\address{Department of Mathematics and Computer Science,
Hobart and William Smith Colleges,
300 Pulteney Street, Geneva, New York 14456, USA}
\email{\href{mailto:biermann@hws.edu}{\nolinkurl{biermann@hws.edu}}}

\author[]{Hern\'an de Alba}
\address{
CONACyT-UAZ, Unidad Acad\'emica de Matem\'aticas de la Universidad Aut\'onoma de Zacatecas,
Calzada Solidaridad entronque Paseo a la Bufa,
Zacatecas, Zac., CP 98000, Mexico}
\email{\href{mailto:hdealbaca@conacyt.mx}{\nolinkurl{hdealbaca@conacyt.mx}}}

\author[]{Federico Galetto}
\address{Department of Mathematics and Statistics, Cleveland State University, Cleveland, OH, 44115-2215, USA}
\email{\href{mailto:f.galetto@csuohio.edu}{\nolinkurl{f.galetto@csuohio.edu}}}

\author[]{Satoshi Murai}
\address{
Department of Mathematics, Faculty of Education,
Waseda University, 1-6-1 Nishi-Waseda, Shinjuku, Tokyo 169-8050, Japan}
\email{\href{mailto:s-murai@waseda.jp}{\nolinkurl{s-murai@waseda.jp}}}

\author[]{Uwe Nagel}
\address{Department of Mathematics, University of Kentucky, 715 Patterson office tower, Lexington, KY 40506-0027, USA}
\email{\href{mailto:uwe.nagel@uky.edu}{\nolinkurl{uwe.nagel@uky.edu}}}

\author[]{Augustine O'Keefe}
\address{Connecticut College, Mathematics Department, 270 Mohegan Avenue Pkwy, New London, CT 06320, USA}
\email{\href{mailto:aokeefe@conncoll.edu}{\nolinkurl{aokeefe@conncoll.edu}}}

\author[]{Tim R\"omer}
\address{Institut f\"ur Mathematik, Universit\"at Osnabr\"uck, 49069 Osnabr\"uck, Germany}
\email{\href{mailto:troemer@uos.de}{\nolinkurl{troemer@uos.de}}}

\author[]{Alexandra Seceleanu}
\address{Department of Mathematics, University of Nebraska-Lincoln,
203 Avery Hall, Lincoln NE 68588--0130, USA}
\email{\href{mailto:aseceleanu@unl.edu}{\nolinkurl{aseceleanu@unl.edu}}}

\title{Betti numbers of symmetric shifted ideals}

\subjclass[2010]{13A15, 13A50, 13D02}

\keywords{Betti numbers, equivariant resolution, linear quotients, shifted ideal, star configuration, symbolic power}

\begin{document}

\ytableausetup {mathmode, boxsize=0.8em}

\begin{abstract}
  We introduce a new class of monomial ideals which we call symmetric
  shifted ideals.  Symmetric shifted ideals are fixed by the natural
  action of the symmetric group and, within the class of monomial
  ideals fixed by this action, they can be considered as an analogue
  of stable monomial ideals within the class of monomial ideals.  We
  show that a symmetric shifted ideal has linear quotients and compute
  its (equivariant) graded Betti numbers.  As an application of this
  result, we obtain several consequences for graded Betti numbers of
  symbolic powers of defining ideals of star configurations.
\end{abstract}

\maketitle{}

\section{Introduction}
\label{sec:introduction}

The study of graded Betti numbers of graded ideals in a polynomial ring is one of the central topics in commutative algebra.  It has always been of great interest to find combinatorial formulas for these numbers for various families of monomial
ideals.  In this paper, we introduce a new class of monomial ideals,
which we call symmetric shifted ideals, and compute  their graded Betti
numbers.

For the definition of these ideals, we first provide some necessary notation. Let $S=\Bbbk[x_1,\dots,x_n]$ be a standard graded polynomial ring over a field $\Bbbk$. For
$\mathbf a =(a_1,\dots,a_n) \in \mathbb{Z}_{\geq 0}^n$, we write $x^{\mathbf a}=x_1^{a_1} \cdots x_n^{a_n}$ and
$|\mathbf a|=a_1+ \cdots +a_n$.  We consider an action of the symmetric group $\sym_n$ on $S$ defined by permutations of the variables and focus on $\sym_n$-fixed monomial ideals $I \subset S$, that is to say, monomial ideals $I \subset S$ with $\sigma(I)=I$ for all $\sigma \in \sym_n$. Such ideals have recently attracted attention
as elements of ascending chains of ideals that are invariant under actions of symmetric groups (see, e.g., \cite{AH07, Dr14, HS12, LNNR18a, LNNR18b, NR17}).

We say that a sequence $\lambda=(\lambda_1,\dots,\lambda_n)$ of
non-negative integers is a partition of $d$ of length $n$, if
$\lambda_1 \leq \cdots \leq \lambda_n$ and $|\mathbf \lambda|=d$.
Let
$$\P_n=\{(\lambda_1,\dots,\lambda_n) \in \mathbb{Z}^n:
0 \leq \lambda_1 \leq \lambda_2 \leq \cdots \leq \lambda_n\}$$ be the
set of partitions of length $n$.  For a monomial
$u=x_1^{a_1} \cdots x_n^{a_n}$ of degree $d$, we write
$\pa(u) \in \mathbb{Z}^n_{\geq 0}$ for the partition obtained from
$(a_1,\dots,a_n)$ by permuting its entries in a suitable way.  For example,
$\pa(x_1^2x_2^0x_3^1x_4^2)=(0,1,2,2)$.  If a monomial ideal
$I \subset S$ is $\sym_n$-fixed, then a monomial $u$ is in $I$ if and
only if $x^{\pa(u)}$ is in $I$.  Thus, the set
$$\P(I)=\{ \lambda \in \P_n:x^\lambda \in I\}$$
determines the monomials in $I$ and the ideal itself.  The central object of study of this note are:

\begin{definition}
  \label{def:shifted}
  Let $I \subset S$ be an $\sym_n$-fixed monomial ideal.  We say that
  $I$ is a \textbf{symmetric shifted ideal} if, for every
  $\lambda =(\lambda_1,\dots,\lambda_n) \in \P(I)$ and $1 \leq k <n$
  with $\lambda_k < \lambda_n$, one has $x^\lambda (x_k/x_n) \in I$.
  Also, we say that $I$ is a \textbf{symmetric strongly shifted ideal}
  if, for every $\lambda =(\lambda_1,\dots,\lambda_n) \in \P(I)$ and
  $1 \leq k <l \leq n$ with $\lambda_k < \lambda_l$, one has
  $x^\lambda (x_k/x_l) \in I$.  We may also refer to these ideals
  simply as shifted and strongly shifted ideals.
\end{definition}

In the following remarks, we note that the above properties can be
defined purely in terms of partitions.

\begin{remark}



  Let $I\subset S$ be an $\sym_n$-fixed monomial ideal. Denote by
  $e_i$ the $i$-th standard basis vector of $\mathbb{Z}^n$.  If for
  every $\lambda=(\lambda_1,\dots,\lambda_n ) \in \P (I)$ with
  $j=\min\{k:\lambda_k=\lambda_n\}$ and for every
  $\lambda - e_j + e_i$ with $i<j$ that is also a partition (i.e., non-decreasing) we have
  $\lambda - e_j + e_i \in \P (I)$, then $I$ is shifted.
\end{remark}

\begin{remark}
  For partitions
  $\lambda=(\lambda_1,\dots,\lambda_n),\mu=(\mu_1,\dots,\mu_n)$, we
  define
  $$\mu \unlhd \lambda \text{ if }
  \mu_k+ \cdots +\mu_n \leq \lambda_k + \cdots + \lambda_n \text{ for all }  k.$$
  The partial order $\lhd$ is known in the literature as the
  \textbf{dominance order} (see, e.g., \cite[\S{}7.2]{Stanley}). An $\sym_n$-fixed monomial ideal $I$ is
  strongly shifted if and only if, for every $\lambda, \mu \in \P_n$
  with $|\lambda|=|\mu|$, $\lambda \in P(I)$ and $\mu \unlhd \lambda $
  imply $\mu \in \P(I)$.
\end{remark}

The definition of shifted and strongly shifted ideals is inspired by
the definition of stable and strongly stable ideals, which
are important classes of monomial ideals since, e.g.,
in characteristic zero generic initial initials are strongly stable.
Recall that Eliahou and Kervaire \cite{EK} constructed minimal graded free resolutions of stable
ideals and gave a simple formula for their graded Betti numbers in terms of
the data of their minimal systems of monomial generators. The
main results of this paper are the following formulas for graded Betti
numbers of symmetric shifted ideals.

\begin{itemize}
\item[(1)] We prove that every symmetric shifted ideal $I$ has linear
  quotients (Theorem \ref{thm:1}). This allows us to give a formula
  for its graded Betti numbers in terms of its monomial generators
  $G(I)$ (Corollary \ref{cor:betti_shifted}).
\item[(2)] We also give a formula for the graded Betti numbers of a
  symmetric shifted ideal $I$ in terms of its partition generators
  $\{\lambda \in P_n: x^\lambda \in G(I)\}$ (Corollary \ref{2.5}).
\item[(3)] We compute equivariant graded Betti numbers of a symmetric
  shifted ideal $I$. In other words, we determine the
  $\Bbbk[\sym_n]$-module structure of $\Tor_i(I,\Bbbk)_j$ (Theorem
  \ref{3.2}).
\end{itemize}

Our initial motivation for defining symmetric shifted ideals comes from the
study of minimal graded free resolutions of symbolic powers of star
configurations.  A \textbf{codimension $c$ star configuration} is a union
of linear subspaces of a projective space $\mathbb P^N$ of the form
$$V_c=\bigcup_{1 \leq i_1 < \cdots < i_c \leq n} H_{i_1}\cap \dots \cap H_{i_c},$$
where $H_1,\dots,H_n$ are distinct hyperplanes in $\mathbb P^N$ such
that the intersection of any $j$ of them is either empty or has
codimension $j$ and where $1 \leq c \leq \min\{n,N\}$.  The name is motivated by the special case of 10 points located at pairwise intersections
of 5 lines in the projective plane, with the lines positioned in the
shape of a star.  Let $L_1,\dots,L_n$ be defining linear forms of
$H_1,\dots,H_n$. Then the defining ideal of $V_c$ is given by
$I_{V_c}=\bigcap_{1 \leq i_1 < \cdots < i_c \leq n}
(L_{i_1},\dots,L_{i_c})$ and its $m$-th symbolic power can be written
as
$$I_{V_c}^{(m)}=\bigcap_{1 \leq i_1 < \cdots < i_c \leq n} (L_{i_1},\dots,L_{i_c})^m$$
because each ideal $(L_{i_1},\dots,L_{i_c})$ is a minimal prime of
$I_{V_c}$ and it is generated by a regular sequence; see, e.g., \cite[Appendix 6,
Lemma 5]{ZS}.  These ideals $I_{V_c}$ and $I_{V_c}^{(m)}$ have been
extensively studied from the point of view of algebra, geometry, and
combinatorics in
\cite{AS12,AS14,BDRH+,BH10,CCG08,CGVT14,CHT11,Fra08,FMN06,GMS06,MN05,PS15,Sch82,TVAV11,Var11}.
We recommend \cite{GHM} as a great introduction to the subject.
In
particular, the Betti numbers of the defining ideal of a star
configuration and its symbolic square have been determined in \cite[Remark 2.11,
Theorem 3.2]{GHM}.
Further motivation for studying these ideals can be found in \cite{GHMN}, which considers generalizations where the linear forms are replaced by forms of arbitrary degree and also explores connections with Stanley-Reisner ideals of matroids.

Let $I_{n,c}$ be
the monomial ideal of $S = \Bbbk [x_1,\dots,x_n]$ defined by
\begin{equation*}
  I_{n,c} = \bigcap_{1\leq i_1 < \dots < i_c \leq n}
  (x_{i_1}, \dots, x_{i_c}).
\end{equation*}
Then the minimal graded free resolution of $I_{V_c}^{(m)}$ is
completely determined by that of $I_{n,c}^{(m)}$; in particular, it
was shown that these two ideals have the same graded Betti numbers
(see \cite[Example 3.4 and Theorem 3.6]{GHMN}, where the more general
case of hypersurface or matroid configurations is considered). The
same reference also shows that both ideals are Cohen-Macaulay.  Note
that the ideal $I_{n,c}$ can also be described as the ideal generated
by all squarefree monomials of degree $n -c + 1$ \cite[Proposition
2.3]{GHMN}.  Obviously the ideal $I_{n,c}^{(m)}$ is $\sym_n$-fixed.
As one of our main results we prove (in Theorem \ref{thm:mainresult})
that:

\begin{thm}
The ideal $I_{n,c}^{(m)}$ is strongly shifted.
\end{thm}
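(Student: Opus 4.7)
The plan is to reduce the statement to a clean inequality on partition coordinates, and then invoke the dominance-order characterization of strongly shifted ideals recorded in the second Remark above.

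First, I will identify which partitions $\lambda \in \P_n$ satisfy $x^\lambda \in I_{n,c}^{(m)}$. Since the ideal $(x_{i_1},\dots,x_{i_c})^m$ contains a monomial $x^{\mathbf a}$ precisely when $a_{i_1} + \cdots + a_{i_c} \geq m$, membership in the intersection $I_{n,c}^{(m)} = \bigcap_{i_1 < \cdots < i_c} (x_{i_1},\dots,x_{i_c})^m$ amounts to requiring $\sum_{j \in T} \lambda_j \geq m$ for every $c$-subset $T \subseteq \{1,\dots,n\}$. Because $\lambda$ is non-decreasing, the minimum over all such $T$ is attained at $T = \{1,\dots,c\}$, which gives the criterion
\[
x^\lambda \in I_{n,c}^{(m)} \iff \lambda_1 + \cdots + \lambda_c \geq m.
\]

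With this criterion in hand, the strongly shifted property is essentially forced. Using the reformulation in the second Remark, it suffices to check that whenever $\lambda \in \P(I_{n,c}^{(m)})$ and $\mu \in \P_n$ with $|\mu|=|\lambda|$ and $\mu \unlhd \lambda$, one has $\mu \in \P(I_{n,c}^{(m)})$. Specializing the dominance inequality at index $k = c+1$ yields $\mu_{c+1} + \cdots + \mu_n \leq \lambda_{c+1} + \cdots + \lambda_n$; subtracting both sides from the common total $|\lambda|=|\mu|$ gives $\mu_1 + \cdots + \mu_c \geq \lambda_1 + \cdots + \lambda_c \geq m$, as desired.

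I do not anticipate any substantive obstacle: once the membership criterion in terms of the smallest $c$ parts is on the table, the verification collapses to a one-line dominance calculation. The only mild choice is to work through the dominance reformulation rather than the original definition phrased via $\lambda + e_k - e_l$; the latter route would amount to checking that transferring a unit from $\lambda_l$ to $\lambda_k$ (with $k<l$ and $\lambda_k < \lambda_k$, ensuring the result is still a partition after sorting) cannot decrease the sum of the smallest $c$ entries, which is the standard Robin-Hood/majorization fact and is exactly what the dominance reformulation packages for free.
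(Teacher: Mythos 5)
Your proof is correct, and at its core it is the same argument as the paper's. Both proofs rest on the membership criterion $x^\lambda \in I_{n,c}^{(m)} \iff |\lambda_{\leq c}| \geq m$ (this is exactly the paper's Proposition \ref{pro:gens_symbolic_star}, proved the same way: for a non-decreasing $\lambda$ the minimum of $\sum_{j\in T}\lambda_j$ over $c$-subsets $T$ is attained at $T=\{1,\dots,c\}$). The only difference is the final step: the paper verifies the definition directly, setting $\mu = \pa\bigl(x^\lambda(x_k/x_l)\bigr)$ and observing $|\mu_{\leq c}| \geq |\lambda_{\leq c}| \geq m$, whereas you route through the dominance-order reformulation in the second Remark and read off $|\mu_{\leq c}| \geq |\lambda_{\leq c}|$ from the tail-sum inequality at index $c+1$. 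Since that Remark is stated in the paper without proof, and its relevant direction is precisely the Robin--Hood fact you name at the end (which is also the one inequality the paper's proof uses), the two arguments are equivalent repackagings; your version buys nothing extra but loses nothing either. One typo: in your last sentence the condition should read $\lambda_k < \lambda_l$, not $\lambda_k < \lambda_k$.
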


Since we find a formula for the graded Betti numbers of symmetric
shifted ideals, this result gives various information on graded Betti
numbers of symbolic powers of star configurations, including their
Castelnuovo-Mumford regularity, a simple formula for the Betti numbers
in the top and bottom rows of the Betti table, an explicit formula for
the Betti numbers of the symbolic cube, and more (see the results in
Section \ref{sec:star-configurations}). Our results for star
configurations also apply to the computation of Betti numbers of fat
point schemes (see Remark \ref{rem:fat_points}).

This paper is organized as follows: In Section
\ref{sec:shifted-ideals}, we study some combinatorial properties of
symmetric shifted ideals.  In Section \ref{sec:shifted-ideals-have},
we prove that symmetric shifted ideals have linear quotients, and in
Section \ref{sec:star-configurations} we apply the results in Section
\ref{sec:shifted-ideals-have} to symbolic powers of star
configurations.  In Sections \ref{sec:decomp-symm-shift} and
\ref{sec:equiv-betti-numb}, we compute the (equivariant) graded Betti
numbers of symmetric shifted ideals.

\subsection*{Acknowledgments} This work was started at the workshop
``Ordinary and Symbolic Powers of Ideals'' at Casa Matemática Oaxaca
(CMO) in May 2017. We thank the organizers of the workshop and CMO for
their kind invitation and warm hospitality.

We also thank the anonymous referee who provided several useful
suggestions to clarify our exposition and improve the quality of our
manuscript.

Shortly after this paper was posted on arXiv, another preprint
appeared by Paolo Mantero \cite{1907.08172} which also computes the
graded Betti numbers of symbolic powers of star configurations. The
results in Mantero's preprint were obtained independently from ours,
and utilize new and interesting techniques. We are also grateful to
Paolo for pointing out a mistake in an earlier version of Corollary
\ref{cor:partial_betti_star}. In December 2019, another preprint
appeared on arXiv by Kuei-Nuan Lin and Yi-Huang Shen that uses and
generalizes some of the results in our paper to $a$-fold product
ideals \cite{LS19}.

The research of the fourth author is partially supported by KAKENHI
16K05102. The fifth author was partially supported by Simons
Foundation grant \#317096. The last author was supported by NSF grant
DMS–1601024 and EpSCOR award OIA–1557417.

\section{Symmetric shifted ideals}
\label{sec:shifted-ideals}

In this section, we discuss some basic properties of symmetric shifted ideals.
For a monomial $u \in S$, we write $\sym_n \cdot u$ for the
$\sym_n$-orbit of $u$ in $S$, i.e.,
$\sym_n \cdot u=\{\sigma(u): \sigma \in \sym_n\}$.  The set $\P_n$ can
be regarded as a poset with the order defined by $\lambda \geq \mu$ if
$x^\mu$ divides $x^\lambda$.  Then the set $\P(I)$ is a filter in the
poset $\P_n$, that is to say, for $\mu \in \P(I)$ and
$\lambda \in \P_n$, one has $\lambda \in \P(I)$ if $\lambda \geq \mu$.
The next lemma shows that the assignment $I \mapsto \P(I)$ defines a
one-to-one correspondence between $\sym_n$-fixed monomial ideals in
$S$ and filters in $\P_n$.

\begin{lemma}
\label{1.1}
Let $\lambda,\mu \in \P_n$.  There exist monomials
$u \in \sym_n \cdot x^\mu$ and $w \in \sym_n \cdot x^\lambda$ such
that $u$ divides $w$ if and only if $x^\mu$ divides $x^\lambda$.
\end{lemma}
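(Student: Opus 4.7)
The easy direction is immediate: if $x^\mu$ divides $x^\lambda$, then the pair $u = x^\mu$ and $w = x^\lambda$ trivially witnesses the conclusion, since $x^\mu \in \sym_n \cdot x^\mu$ and $x^\lambda \in \sym_n \cdot x^\lambda$.

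For the nontrivial direction, suppose $u \in \sym_n \cdot x^\mu$ and $w \in \sym_n \cdot x^\lambda$ satisfy $u \mid w$. Write the exponent vectors of $u$ and $w$ as $\mathbf{a} = (a_1, \dots, a_n)$ and $\mathbf{b} = (b_1, \dots, b_n)$ respectively. Then $\mathbf{a}$ is a permutation of $(\mu_1, \dots, \mu_n)$, $\mathbf{b}$ is a permutation of $(\lambda_1, \dots, \lambda_n)$, and the divisibility $u \mid w$ is equivalent to $a_i \leq b_i$ for every $i$. Since $\mu, \lambda \in \P_n$ are already sorted in non-decreasing order, proving $x^\mu \mid x^\lambda$ amounts to showing $\mu_k \leq \lambda_k$ for all $k$.

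The plan is to extract this from $\mathbf{a} \leq \mathbf{b}$ via a rearrangement argument. Choose a permutation $\tau \in \sym_n$ that sorts $\mathbf{b}$, so that $b_{\tau(1)} \leq b_{\tau(2)} \leq \cdots \leq b_{\tau(n)}$, i.e., $b_{\tau(k)} = \lambda_k$ for every $k$. Coordinatewise we then have $a_{\tau(k)} \leq b_{\tau(k)} = \lambda_k$ for all $k$. Fix an index $k$; the $k$ values $a_{\tau(1)}, \dots, a_{\tau(k)}$ all satisfy $a_{\tau(j)} \leq \lambda_j \leq \lambda_k$, so they form a $k$-element submultiset of $\{\mu_1, \dots, \mu_n\}$ consisting entirely of entries $\leq \lambda_k$. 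Since $\mu_1 \leq \cdots \leq \mu_n$, the existence of at least $k$ entries of $\mu$ that are $\leq \lambda_k$ forces $\mu_k \leq \lambda_k$, which is what we wanted.

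There is no real obstacle: the entire content is the elementary combinatorial fact that if one integer vector dominates another entrywise, then the same holds after both are sorted. The only thing to be careful about is notation for how $\sym_n$ acts on exponents of monomials versus on positions in the tuple, but this is cosmetic. Once the reduction to the sorting statement is made, the one-line pigeonhole argument via $\tau$ finishes the proof.
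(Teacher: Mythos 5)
Your proof is correct and takes essentially the same approach as the paper's: both reduce to the observation that the $k$ smallest-indexed exponents (after sorting the $\lambda$-side) give at least $k$ entries of $\mu$ that are $\leq \lambda_k$, whence $\mu_k \leq \lambda_k$. The only cosmetic difference is that you introduce the sorting permutation $\tau$ explicitly, whereas the paper absorbs it into the permutation $\sigma$ from the start.
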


\begin{proof}
  The ``if'' part is obvious. We prove the ``only if'' part.  Let
  $\lambda=(\lambda_1,\dots,\lambda_n)$ and $\mu=(\mu_1,\dots,\mu_n)$.
  The assumption says that there exists $\sigma \in \sym_n$ such that
  $$\mu_{\sigma(1)} \leq \lambda_1, \dots,\mu_{\sigma(n)} \leq \lambda_n.$$
  Since $\lambda_1 \leq \cdots \leq \lambda_n$, for each
  $k=1,2,\dots,n$ we have
  $$\mu_{\sigma(1)} \leq \lambda_k,\mu_{\sigma(2)} \leq \lambda_k,\dots,\mu_{\sigma(k)}\leq \lambda_k.$$
  This implies that the partition $\mu$ contains at least $k$ entries
  smaller than or equal to $\lambda_k$.  Therefore
  $\mu_k \leq \lambda_k$ for all $k$, and $x^\mu$ divides $x^\lambda$.
\end{proof}

Throughout the rest of the paper, we will say that $\mu \in \P_n$
divides $\lambda \in \P_n$ if $x^\mu$ divides $x^\lambda$.

Next, we show that to check the conditions of symmetric (strongly)
shifted ideals, it is enough to check them on generators.  Let $I$ be a
monomial ideal.  We write $G(I)$ for the unique set of minimal
monomial generators of $I$.  When $I$ is $\sym_n$-fixed, we define
$$\Lambda(I)=\{ \lambda \in P(I): x^\lambda \in G(I)\}.$$
Note that
$G(I)=\biguplus_{\lambda \in \Lambda(I)} \sym_n\cdot x^\lambda$, where
$\biguplus$ denotes a disjoint union of sets.  As the next statement
shows, to check $I$ is shifted it is enough to check the condition of
Definition \ref{def:shifted} for partitions in $\Lambda(I)$.

\begin{lemma}
  \label{1.2}
  Let $I \subset S$ be an $\sym_n$-fixed monomial ideal.  Then $I$ is
  shifted if and only if, for every
  $\lambda=(\lambda_1,\dots,\lambda_n) \in \Lambda(I)$ and
  $1 \leq k<n$ with $\lambda_k < \lambda_n$, one has
  $x^\lambda ( x_k/x_n) \in I$.
\end{lemma}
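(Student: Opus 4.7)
The plan is to prove the two implications separately. The forward direction is immediate since $\Lambda(I) \subseteq \P(I)$, so nothing beyond unwinding the definition is required. The content is in the reverse direction: assuming the shifting condition holds on the partitions in $\Lambda(I)$, I want to extend it to every $\lambda \in \P(I)$.

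For this, fix $\lambda = (\lambda_1, \ldots, \lambda_n) \in \P(I)$ together with $k < n$ satisfying $\lambda_k < \lambda_n$, and aim to show $x^\lambda(x_k/x_n) \in I$. Since $x^\lambda \in I$, some element of $G(I)$ divides it, and because $G(I) = \biguplus_{\nu \in \Lambda(I)} \sym_n \cdot x^\nu$, Lemma \ref{1.1} lets me choose $\nu \in \Lambda(I)$ with $x^\nu \mid x^\lambda$ in $S$, equivalently $\nu_i \leq \lambda_i$ for all $i$. The strategy is to reduce the verification on $\lambda$ to the verification on $\nu$, and the natural split is according to whether the shifting condition is available for $\nu$.

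I would then carry out two cases. If $\nu_k < \nu_n$, the hypothesis gives $x^\nu(x_k/x_n) \in I$, and multiplying by the monomial $x^{\lambda - \nu}$ (well defined since $\nu \leq \lambda$ componentwise) produces $x^\lambda(x_k/x_n) \in I$. If instead $\nu_k = \nu_n$, then because $\nu$ is a partition, $\nu_k = \nu_{k+1} = \cdots = \nu_n$; combined with $\nu_k \leq \lambda_k < \lambda_n$ this yields $\nu_n \leq \lambda_n - 1$, while for $i \neq k, n$ we still have $\nu_i \leq \lambda_i$, and at index $k$ trivially $\nu_k \leq \lambda_k + 1$. Hence $x^\nu$ divides the monomial $x^\lambda(x_k/x_n)$ directly, placing it in $I$.

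The main thing to watch is the second case, since there the shifting hypothesis for $\nu$ is vacuous and one must argue instead that the original generator $x^\nu$ already divides the shifted monomial. This is the step that genuinely uses $\lambda_k < \lambda_n$ (as opposed to merely $\lambda_k \leq \lambda_n$) to gain the extra unit needed at coordinate $n$. Apart from that, everything is bookkeeping on exponent vectors together with one appeal to Lemma \ref{1.1} to pass from orbit-divisibility to partition-divisibility.
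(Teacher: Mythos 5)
Your proof is correct and takes essentially the same approach as the paper: reduce to a generator partition $\nu\in\Lambda(I)$ dividing $\lambda$ via Lemma \ref{1.1}, then split into two cases, applying the hypothesis to $\nu$ in one and observing that $x^\nu$ already divides the shifted monomial in the other. The only (immaterial) difference is that the paper's case split is on whether $\nu_n=\lambda_n$ rather than on whether $\nu_k<\nu_n$; both dichotomies yield the same two arguments.
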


\begin{proof}
  The ``only if'' part is obvious.  We prove the ``if'' part.  Let
  $\mu=(\mu_1,\dots,\mu_n) \in \P(I)$ and $1 \leq k <n$
  with $\mu_k < \mu_n$.  We claim $x^\mu (x_k/x_n) \in I$.

  Let $\lambda=(\lambda_1,\dots,\lambda_n) \in \Lambda(I)$ be a partition that
  divides $\mu$.  If $\lambda_n=\mu_n$, then
  $w=x^\lambda (x_k/x_n) \in I$ by assumption and $w$ divides
  $x^\mu (x_k/x_n)$.  If $\lambda_n <\mu_n$, then $x^\lambda x_k$
  divides $x^\mu (x_k/x_n)$.  In both cases,
  $x^\mu(x_k/x_n) \in I$ as desired.
\end{proof}

An analogous statement holds for symmetric strongly shifted ideals. We
omit the proof since it is essentially the same as the one for
symmetric shifted ideals.

\begin{lemma}
  \label{1.2bis}
  Let $I \subset S$ be an $\sym_n$-fixed monomial ideal.  Then $I$ is
  strongly shifted if and only if, for every
  $\lambda=(\lambda_1,\dots,\lambda_n) \in \Lambda(I)$ and
  $1 \leq k < l \leq n$ with $\lambda_k < \lambda_l$, one has
  $x^\lambda ( x_k/x_l) \in I$.
\end{lemma}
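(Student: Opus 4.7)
The plan is to mimic the proof of Lemma \ref{1.2}, promoting the distinguished index $n$ to an arbitrary index $l$. The ``only if'' direction is immediate: if $I$ is strongly shifted, the defining condition in particular applies to $\lambda \in \Lambda(I)$.

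For the ``if'' direction, I would fix $\mu = (\mu_1, \dots, \mu_n) \in \P(I)$ and indices $1 \leq k < l \leq n$ with $\mu_k < \mu_l$, and prove $x^\mu(x_k/x_l) \in I$. Since $\mu \in \P(I)$ there is some $\lambda \in \Lambda(I)$ with $x^\lambda$ dividing a monomial in the orbit $\sym_n \cdot x^\mu$; by Lemma \ref{1.1} we may assume $\lambda$ divides $\mu$ as a partition, i.e., $\lambda_i \leq \mu_i$ for every $i$.

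The argument then splits on the comparison between $\lambda_l$ and $\mu_l$. If $\lambda_l = \mu_l$, then $\lambda_k \leq \mu_k < \mu_l = \lambda_l$, so the generator hypothesis applies to $\lambda$ with indices $k<l$ and gives $x^\lambda(x_k/x_l) \in I$; since $\lambda_i \leq \mu_i$ for all $i$ and the two monomials have the same $x_l$-exponent decrement, $x^\lambda(x_k/x_l)$ divides $x^\mu(x_k/x_l)$. If instead $\lambda_l < \mu_l$, then $\lambda_l \leq \mu_l - 1$, so $x^\lambda \cdot x_k$ divides $x^\mu(x_k/x_l)$, and $x^\lambda \cdot x_k \in I$ since $x^\lambda \in I$. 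Either way, $x^\mu(x_k/x_l) \in I$.

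The only nontrivial ingredient is Lemma \ref{1.1}, used to reduce ``$x^\lambda$ divides some permutation of $x^\mu$'' to ``$\lambda$ divides $\mu$ as partitions''; the rest is a two-case divisibility check. I do not anticipate any serious obstacle since the generalization from $l=n$ (as in Lemma \ref{1.2}) to arbitrary $l$ requires no new ideas beyond replacing $n$ by $l$ throughout.
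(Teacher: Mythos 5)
Your proof is correct and follows exactly the route the paper intends: the paper omits the proof of Lemma \ref{1.2bis}, stating it is essentially the same as that of Lemma \ref{1.2}, and your argument is the correct adaptation, replacing the distinguished index $n$ by an arbitrary $l$ and running the same two-case divisibility check (with the needed observation $\lambda_k \leq \mu_k < \mu_l = \lambda_l$ to invoke the hypothesis in the first case).
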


\begin{example}
  Let $I \subset \Bbbk[x_1,x_2,x_3]$ be the $\sym_3$-fixed monomial
  ideal with
  \begin{equation*}
    \Lambda(I)=\{(1,1,1), (0,1,2), (0,0,4)\}.
  \end{equation*}
  The ideal $I$ is strongly shifted and is minimally generated by the
  following ten monomials:
  $$x_1x_2x_3,\quad x_2x_3^2,\quad x_2^2x_3,\quad x_1x_3^2,\quad x_1^2x_3,\quad
  x_1x_2^2,\quad x_1^2x_2,\quad x_1^4,\quad x_2^4,\quad x_3^4.$$
\end{example}

\begin{example}
  Let $I \subset \Bbbk[x_1,x_2,x_3,x_4]$ be the $\sym_4$-fixed
  monomial ideal with
  \begin{equation*}
    \Lambda(I)=\{(1,1,2,2),(0,2,2,2),(0,1,2,3)\}.
  \end{equation*}
  Then $I$ is shifted but not strongly shifted since
  $(0,1,2,3) \in \P(I)$ but $(1,1,1,3) \not \in \P(I)$.
\end{example}

For a partition $\lambda=(\lambda_1,\dots,\lambda_n)$, we define the
quantities
\begin{equation*}
  \begin{split}
    &p(\lambda)=\#\{k: \lambda_k < \lambda_n-1\},\\
    &r(\lambda)=\#\{k: \lambda_k=\lambda_n\}.
  \end{split}
\end{equation*}
We also introduce the truncation of the partition $\lambda$ by setting
$$\lambda_{\leq k}=(\lambda_1,\dots,\lambda_k)$$
for $k=1,2,\dots,n$.  Let $<_\lex$ be the total order on
$\mathbb{Z}_{\geq 0}^n$ defined by
$$\mathbf a=(a_1,\dots,a_n) <_\lex (b_1,\dots,b_n)=\mathbf b$$
if
\begin{enumerate}[label=(\roman*)]
\item $|\mathbf a| < |\mathbf b|$, or
\item $|\mathbf a| = |\mathbf b|$ and the leftmost non-zero entry of
  $(a_1-b_1,\dots,a_n-b_n)$ is positive.
\end{enumerate}

\begin{remark}
  Our definition of the order $<_\lex$ is the opposite of the more
  familiar lexicographic order for monomials (cf. \cite[Ch.~2
  \S{}2]{CLO}). This is necessary to ensure compatibility with our
  definition of partitions as non-decreasing sequences.
\end{remark}

We establish another result that will be used in later sections.

\begin{lemma}
  \label{1.3}
  Let $I$ be a symmetric shifted ideal.  For every
  $\mu \in P(I)$, there is a unique $\lambda \in \Lambda(I)$ such that
  \begin{enumerate}[label=(\alph*)]
  \item $\lambda$ divides $\mu$, and
  \item $\lambda_{\leq p(\lambda)}=\mu_{\leq p(\lambda)}$.
  \end{enumerate}
\end{lemma}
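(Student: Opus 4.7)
The plan is to prove existence by an extremal choice of $\lambda$ and uniqueness by a divisibility argument that exploits minimal generation.

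For existence, I would choose $\lambda\in\Lambda(I)$ dividing $\mu$ that minimizes the reversed tuple $(\lambda_n,\lambda_{n-1},\ldots,\lambda_1)$ in the usual lexicographic order on $\mathbb{Z}^n$; some such $\lambda$ exists because $\mu\in\P(I)$ together with Lemma~\ref{1.1} and the orbit decomposition $G(I)=\biguplus_{\lambda\in\Lambda(I)}\sym_n\cdot x^\lambda$ guarantees that at least one element of $\Lambda(I)$ divides $\mu$. Condition (a) holds by construction. For (b), I would argue by contradiction: suppose $\lambda_k<\mu_k$ for some $k\leq p(\lambda)$. Since $\lambda_k<\lambda_n-1$, Definition~\ref{def:shifted} yields $x^\lambda(x_k/x_n)\in I$; let $\nu$ denote its partition. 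The componentwise inequalities $\lambda_k+1\leq\mu_k$ and $\lambda_n-1\leq\mu_n$ together with Lemma~\ref{1.1} give $\nu\mid\mu$, and a direct inspection of the multiset of exponents (separating $r(\lambda)=1$ from $r(\lambda)\geq 2$) shows that $\nu$'s reversed tuple is strictly smaller than that of $\lambda$. Any $\lambda^*\in\Lambda(I)$ dividing $\nu$ then contradicts the extremal choice of $\lambda$.

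For uniqueness, suppose $\lambda,\lambda'\in\Lambda(I)$ both satisfy (a) and (b); without loss of generality $p(\lambda)\leq p(\lambda')$. The plan is to show that one of $\lambda,\lambda'$ divides the other, which forces equality because both are minimal generators. In the case $p(\lambda)<p(\lambda')$, condition (b) together with $\lambda_{p(\lambda)+1}\leq\mu_{p(\lambda)+1}=\lambda'_{p(\lambda)+1}<\lambda'_n-1$ (the last inequality from the definition of $p(\lambda')$) forces $\lambda_n<\lambda'_n$, after which $\lambda_j\leq\lambda'_j$ for every $j$ follows by inspection. In the case $p(\lambda)=p(\lambda')$, condition (b) gives prefix equality, so the two partitions can differ only on their tails, which consist of entries in $\{\lambda_n-1,\lambda_n\}$ and $\{\lambda'_n-1,\lambda'_n\}$ respectively; a brief case analysis on $\lambda_n$ versus $\lambda'_n$, and on the number of copies of $\lambda_n-1$ in the tails, again produces a divisibility.

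I expect the main obstacle to lie in the existence step, specifically in checking that the single shifting move produces a partition $\nu$ whose reversed tuple is strictly smaller than that of $\lambda$. When $r(\lambda)\geq 2$, the sorted $\nu$ retains $r(\lambda)-1$ entries equal to $\lambda_n$, so the strict decrease appears only at position $n-r(\lambda)+1$; one must verify that each candidate for this entry, namely $\lambda_n-1$, $\lambda_k+1$, and the unchanged $\lambda_j$ for $j\leq n-r(\lambda)$ with $j\neq k$, is bounded above by $\lambda_n-1$, which follows from $\lambda_k<\lambda_n-1$ and the definition of $r(\lambda)$.
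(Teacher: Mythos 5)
Your proposal is correct and follows essentially the same route as the paper: existence via a minimal element of $\{\rho\in\Lambda(I):\rho\text{ divides }\mu\}$ with respect to a total order refining divisibility, pushed down by a single shift $x^\lambda(x_k/x_n)$ when (b) fails, and uniqueness via the explicit shape forced by (b) together with minimality of generators. The only difference is cosmetic — you order partitions by the reversed tuple $(\lambda_n,\dots,\lambda_1)$ while the paper uses its order $<_\lex$ — and both orders satisfy the two properties the argument needs (divisibility implies $\leq$, and the shifted partition is strictly smaller).
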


\begin{proof}
  Let $\mu=(\mu_1,\dots,\mu_n) \in \P(I)$.  Let
  $$\lambda=(\lambda_1,\dots,\lambda_n)=
  \min_{<_\lex} \{ \rho \in \Lambda(I): \rho \mbox{ divides } \mu\}$$
  and $p=p(\lambda)$.  Clearly $\lambda$ satisfies condition (a).   We claim that $\lambda$ fulfills also (b), that is to say, $\lambda_k=\mu_k$ for all $k \leq p$.

  Suppose to the contrary that there is $k \leq p$ such that
  $\lambda_k < \mu_k$.  Then $w=x^\lambda(x_k/x_n)$ divides $x^\mu$
  and, by definition of symmetric shifted ideals, we have $w \in I$.
  Let $\lambda'= \pa(w)$.  Observe that $\lambda'$ is constructed from
  $\lambda$ by replacing the part $\lambda_n$ with $\lambda_n -1$, the
  part $\lambda_k$ with $\lambda_k +1$, and rearranging in
  non-decreasing order. By definition of $r=r(\lambda)$, the partition
  $\lambda$ has $r$ parts equal to $\lambda_n$. Now suppose that
  $\lambda' = \lambda$. Then $\lambda'$ also has $r$ parts equal to
  $\lambda_n$, so we must have $\lambda_k +1 = \lambda_n$ or
  $\lambda_k = \lambda_n -1$. However, the definition of
  $p = p(\lambda)$ implies that $\lambda_k < \lambda_n -1$. We deduce
  that $\lambda' \neq \lambda$.  Then ${\lambda'}$ divides $\mu$ by
  Lemma \ref{1.1} and there is a $\rho \in \Lambda(I)$ that divides
  $\lambda'\in \P(I)$. However, such a $\rho$ satisfies
  $\rho \leq_\lex \lambda' <_\lex \lambda$, contradicting the
  minimality of $\lambda$.

  Next, we prove uniqueness.  Suppose that
  $\lambda,\lambda' \in \Lambda(I)$ satisfy conditions (a) and (b).
  We prove $\lambda=\lambda'$.  Let $p=p(\lambda)$ and
  $p'=p(\lambda')$.  We may assume $p \leq p'$.  By condition (b),
  $\lambda$ and $\lambda'$ are of the form
  $$\lambda=(\mu_1,\dots,\mu_p,
  \lambda_n-1,\dots,\lambda_n-1,\lambda_n,\dots,\lambda_n)$$
  and
  $$\lambda'=(\mu_1,\dots,\mu_p,\mu_{p+1}\dots,\mu_{p'},
  \lambda'_n-1,\dots,\lambda'_n-1,\lambda'_n,\dots,\lambda'_n).$$
  Suppose $p<p'$.  Since $\lambda$ divides $\mu$, we have
  $$\mu_k \geq \lambda_k \geq \lambda_n-1 \ \ \ \mbox{ for }p<k\leq p',$$
  and
  $$\lambda'_n -1 > \mu_{p'} \geq \lambda_n-1.$$
  But these inequalities say that $\lambda$ properly divides
  $\lambda'$, contradicting $\lambda,\lambda' \in \Lambda(I)$.  Hence
  $p=p'$.  However, given the shape of $\lambda$ and $\lambda'$,
  $p=p'$ implies that either $\lambda$ divides $\lambda'$ or
  $\lambda'$ divides $\lambda$.  Since
  $\lambda,\lambda' \in \Lambda(I)$, $\lambda$ must be equal to
  $\lambda'$ as desired.
\end{proof}

\section{Symmetric shifted ideals have linear quotients}
\label{sec:shifted-ideals-have}

A monomial ideal $I \subset S$ is said to have \textbf{linear quotients}
if there is an order $u_1,\dots,u_s$ of monomials in $G(I)$ such that
the colon ideal
$$(u_1,\dots,u_{k-1}):u_k$$
is generated by variables for all $k=2,3,\dots,s$.  A nice consequence
of having linear quotients is that we can easily compute the graded
Betti numbers from the above colon ideals.  Recall that, for a
graded ideal $I \subset S$, graded Betti numbers of $I$ are the
numbers $\beta_{i,j}(I)=\dim_\Bbbk\Tor_i(I,\Bbbk)_j$.  Herzog and Takayama
produced a formula for the bigraded Poincar\'e series of a monomial
ideal with linear quotients \cite[Corollary 1.6]{HT}:

\begin{theorem}\label{HerTaka}
With the same notation as above,
$$\beta_{i,i+j}(I) = \sum_{ \deg (u_k)=j} \binom{ |G((u_1,\dots,u_{k-1}):u_k)|}{i}.$$
\end{theorem}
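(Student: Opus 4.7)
The plan is to proceed by induction on $s = |G(I)|$, using an iterated mapping cone construction (Horseshoe Lemma). The base case $s = 1$ is immediate since $I = (u_1)$ is principal. For the inductive step, let $I' = (u_1, \ldots, u_{s-1})$ and $J = I' : u_s$; by the linear quotients hypothesis, $J$ is minimally generated by $c := |G(J)|$ variables $x_{j_1}, \ldots, x_{j_c}$. The ordering $u_1, \ldots, u_{s-1}$ still exhibits linear quotients for $I'$, so the induction hypothesis applies to $I'$.

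First I would consider the short exact sequence
\begin{equation*}
  0 \to I' \to I \to I/I' \to 0,
\end{equation*}
together with the graded isomorphism $S/J(-\deg u_s) \xrightarrow{\cdot u_s} I/I'$. Since the variables $x_{j_1}, \ldots, x_{j_c}$ form a regular sequence, the Koszul complex on them (twisted by $-\deg u_s$) is a minimal graded free resolution $K_\bullet$ of $I/I'$, with $\beta_{i, i + \deg u_s}(I/I') = \binom{c}{i}$ and $\beta_{i, i+j}(I/I') = 0$ for $j \neq \deg u_s$. By induction, $I'$ has a minimal graded free resolution $F_\bullet$ whose Betti numbers satisfy the claimed formula.

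Next I would invoke the Horseshoe Lemma: lifting the connecting morphism produces components $\phi_i : K_i \to F_{i-1}$ which, together with $d^F$ and $d^K$, assemble into the differential of a graded free resolution of $I$ whose $i$-th module is $F_i \oplus K_i$. Adding ranks in each internal degree immediately gives the claimed formula as an upper bound for $\beta_{i,i+j}(I)$, since the contributions of $F_\bullet$ are handled by induction and those of $K_\bullet$ contribute $\binom{c}{i}$ exactly when $j = \deg u_s$.

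The main obstacle is to show that this mapping cone is minimal, i.e. that the maps $\phi_i$ can be chosen with every entry in the maximal ideal $\mideal$. For $\phi_1 : K_1 \to F_0$, a Koszul generator corresponding to $x_{j_\alpha} \in G(J)$ must map to a representative in $F_0$ of the element $x_{j_\alpha} u_s \in I'$. The crucial observation is that $x_{j_\alpha} u_s \in \mideal \cdot I'$: if one had $x_{j_\alpha} u_s = u_l$ for some $l < s$, then $u_l \in \mideal \cdot (u_s) \subseteq \mideal \cdot I$, contradicting $u_l \in G(I)$; hence any monomial $u_l$ with $l<s$ dividing $x_{j_\alpha}u_s$ does so with non-unit cofactor, and a representative can be chosen in $\mideal \cdot F_0$. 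Higher $\phi_i$ are then constructed inductively with entries in $\mideal$, using that the Koszul differentials already have entries among $x_{j_1}, \ldots, x_{j_c} \in \mideal$ and that $F_\bullet$ is minimal, so no unit entries can appear when solving the chain-map equations $d^F \phi_i = \phi_{i-1} d^K$. Granting minimality, the ranks add up to exactly the claimed formula, completing the induction.
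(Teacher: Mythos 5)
Your strategy---induction on $|G(I)|$ via the short exact sequence $0\to I'\to I\to I/I'\to 0$, the identification $I/I'\cong (S/J)(-\deg u_s)$ with $J=I':u_s$ generated by variables, the Koszul resolution of $S/J$, and the Horseshoe Lemma---is precisely the iterated mapping cone argument of Herzog and Takayama; the paper gives no proof of this statement and simply cites \cite[Corollary~1.6]{HT}. Your treatment of $\phi_1$ is correct and complete: $x_{j_\alpha}u_s$ cannot equal a minimal generator $u_l$ of $I$, so it lies in $\mideal I'$ and admits a lift into $\mideal F_0$.

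The gap is in the minimality of the higher comparison maps. From $d^F\phi_i=-\phi_{i-1}d^K$ you conclude that ``no unit entries can appear'' because $d^K$ has entries in $\mideal$ and $F_\bullet$ is minimal. That inference is not valid: the right-hand side lies in $\mideal^2 F_{i-2}\cap\operatorname{im}d^F$, but an element of $\mideal^2 F_{i-2}\cap\operatorname{im}d^F$ need not have any preimage in $\mideal F_{i-1}$. (Already for the minimal resolution $0\to S(-2)\xrightarrow{\,x^2\,}S$ of $S/(x^2)$ over $\Bbbk[x]$, the boundary $x^2$ lies in $\mideal^2 F_0$ while its unique preimage is a basis element of $F_1$.) Equivalently, minimality of the horseshoe resolution is the nontrivial assertion that every connecting homomorphism $\Tor_i(I/I',\Bbbk)\to\Tor_{i-1}(I',\Bbbk)$ vanishes, which your argument establishes only for $i=1$. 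The standard way to close the gap is a degree count: $K_i$ is generated in internal degree $\deg u_s+i$, whereas by the inductive hypothesis $F_{i-1}$ is generated in the degrees $\deg u_l+(i-1)$ with $l<s$, so every entry of $\phi_i$ is homogeneous of degree $\deg u_s+1-\deg u_l$ and is forced into $\mideal$ as soon as $\deg u_l\leq\deg u_s$. Hence minimality is automatic whenever the admissible order is degree-nondecreasing---which holds for the order $\prec$ used in this paper, and can always be arranged for an ideal with linear quotients---but your argument as written does not exclude a unit entry of $\phi_i$ against a generator $u_l$ with $\deg u_l=\deg u_s+1$.
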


Next, we present our first main result about symmetric shifted ideals.

\begin{theorem}
  \label{thm:1}
  Symmetric shifted ideals have linear quotients.
\end{theorem}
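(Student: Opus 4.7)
My plan is to order the minimal generators $G(I)=\{u_1,\dots,u_s\}$ by the total order $<_\lex$ from Section~\ref{sec:shifted-ideals} applied to exponent vectors, so that $u_j<_\lex u_k$ whenever $j<k$, and to show this yields linear quotients. Fix $u_k=x^a$, set $J_k=(u_1,\dots,u_{k-1}):u_k$, and recall that $J_k$ is generated by $\{u_j/\gcd(u_j,u_k) : j<k\}$. It suffices to exhibit, for each $j<k$, a variable $x_i\in J_k$ dividing $u_j/\gcd(u_j,u_k)$. Writing $u_j=x^b$, consider first the case $|b|=|a|$: let $i_0$ be the smallest index at which $a$ and $b$ differ. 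The definition of $<_\lex$ forces $b_{i_0}>a_{i_0}$, so $x_{i_0}\mid u_j/\gcd(u_j,u_k)$, and the task reduces to proving $x_{i_0}\in J_k$.

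To produce $u_l\in G(I)$ with $u_l<_\lex u_k$ and $u_l\mid u_k x_{i_0}$, I would first attempt the following. If some index $m>i_0$ satisfies $a_m=\max_j a_j$ while $a_{i_0}<\max_j a_j$, then Definition~\ref{def:shifted} applied to $\pa(u_k)$ and transported by the permutation sending $\pa(u_k)$ to $a$ gives $u_k x_{i_0}/x_m\in I$; any minimal generator $u_l$ dividing this monomial satisfies $u_l<_\lex u_k$ because the exponent-vector difference $e_{i_0}-e_m$ has leftmost nonzero entry positive. This handles the easy sub-case.

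The main obstacle arises when no such $m$ exists---either $a_{i_0}$ is already the maximum of $a$, or every maximum position of $a$ lies at or before $i_0$---so the shifted condition on $u_k$ alone provides no useful swap. Here my plan is to apply Lemma~\ref{1.3} to $\pa(u_k x_{i_0})\in\P(I)$, obtaining a distinguished $\nu\in\Lambda(I)$ with $\nu\mid\pa(u_k x_{i_0})$ and matching prefix up through position $p(\nu)$; Lemma~\ref{1.1} then furnishes an element of $\sym_n\cdot x^\nu$ dividing $u_k x_{i_0}$. The delicate verification is that this element is strictly smaller than $u_k$ in $<_\lex$; this requires a position-by-position comparison of exponent vectors exploiting both the matching-prefix property from Lemma~\ref{1.3} and the structural constraints on $a$ (no maximum strictly right of $i_0$) and on $b$ (the first-difference structure imposed by $<_\lex$). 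The remaining case $|b|<|a|$ admits a similar, simpler treatment, since $u_j<_\lex u_k$ already holds on degree grounds, and one directly analyzes when $u_j$ (or a modification of it) divides $u_k x_i$ for some $i$ in the support of $u_j/\gcd(u_j,u_k)$.
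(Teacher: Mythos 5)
Your proposed order is not the order the paper uses, and it does not work. You sort $G(I)$ by $<_\lex$ applied directly to exponent vectors, whereas the paper's order $\prec$ first compares the \emph{partitions} $\pa(v)$ and $\pa(u)$ under $<_\lex$ and only uses the lex comparison of exponent vectors to break ties within a single $\sym_n$-orbit. These orders genuinely differ, and the paper's own Example \ref{exa:shifted_lin_quot} refutes yours. Take $I\subset\Bbbk[x_1,\dots,x_4]$ with $\Lambda(I)=\{(1,1,2,2),(0,2,2,2),(0,1,2,3)\}$. Under your order the four smallest generators are $u_1=x_1^3x_2^2x_3$, $u_2=x_1^3x_2^2x_4$, $u_3=x_1^3x_2x_3^2$, $u_4=x_1^3x_2x_4^2$ (exponent vectors $(3,2,1,0)<_\lex(3,2,0,1)<_\lex(3,1,2,0)<_\lex(3,1,0,2)$), and
\begin{equation*}
(u_1,u_2,u_3):u_4=(x_2x_3,\;x_2,\;x_3^2)=(x_2,x_3^2),
\end{equation*}
which is not generated by variables: $x_3u_4=x_1^3x_2x_3x_4^2$ is divisible by none of $u_1,u_2,u_3$. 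The generator that would put $x_3$ into the colon is $x_1^2x_2x_3x_4^2$, which lies in the orbit of the partition $(1,1,2,2)$; under the paper's order it precedes $u_4$ because $(1,1,2,2)<_\lex(0,1,2,3)$, but under yours it comes \emph{after} $u_4$ since $(3,1,0,2)<_\lex(2,1,1,2)$. So the essential point you are missing is that generators must be grouped by orbit, with whole orbits ordered by the lex order on their partitions, before any comparison of individual exponent vectors.

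A secondary problem: even granting some order, your plan for the ``main obstacle'' does not produce a predecessor. When $a_{i_0}=\max_j a_j$, the partition $\pa(u_kx_{i_0})$ is just $\pa(u_k)$ with its largest part increased by $1$, and $\lambda=\pa(u_k)$ itself satisfies conditions (a) and (b) of Lemma \ref{1.3} for it; by the uniqueness in that lemma the distinguished partition is $\lambda$, and the only element of $\sym_n\cdot x^\lambda$ dividing $u_kx_{i_0}$ is $u_k$ itself, which is not a predecessor. The paper's argument avoids this by proving, for its order, an exact description of the colon ideal \eqref{eq_2.1} (the inclusion ``$\supseteq$'' via the shifted property, the inclusion ``$\subseteq$'' via the uniqueness statement of Lemma \ref{1.3} applied to the auxiliary ideal $J'$), rather than trying to certify each quotient $u_j/\gcd(u_j,u_k)$ one at a time.
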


Using the same notation as in Section \ref{sec:shifted-ideals}, we
define a total order on the set of monomials in $S$.
Let $\lambda,\mu \in \P_n$. For distinct
monomials $v= \tau(x^\mu)$ and $u=\sigma(x^\lambda)$ in $S$, we define
$v \prec u$ if
\begin{enumerate}[label=(\roman*)]
\item $ \mu <_\lex \lambda$, or
\item $\mu=\lambda$ and $v <_\lex u$.
\end{enumerate}
Note in particular that if $v$ strictly divides $u$, then $v \prec u$.

\begin{proof}[Proof of Theorem \ref{thm:1}.]
  Let $I \subset S$ be a symmetric shifted ideal
  and fix a monomial $u=\sigma(x^\lambda) \in G(I)$ with
  $\lambda=(\lambda_1,\dots,\lambda_n) \in \Lambda(I)$.  Let
  $p=p(\lambda)$ and $r=r(\lambda)$. Thus, we have
  $$u=\sigma(x^\lambda)
  = x_{\sigma(1)}^{\lambda_1} \cdots x_{\sigma(p)}^{\lambda_{p}}
  x_{\sigma(p+1)}^{\lambda_n-1} \cdots x_{\sigma(n-r)}^{\lambda_n-1}
  x_{\sigma(n-r+1)}^{\lambda_n} \cdots x_{\sigma(n)}^{\lambda_n}.$$
  We also define the quantity
  $$\max(u)=\max\{\sigma(k):\lambda_k=\lambda_n\}
  =\max\{\sigma(n-r+1),\dots,\sigma(n)\}.$$
  Let $$J=(v \in G(I): v \prec u).$$
  We claim that
  \begin{equation}
    \label{eq_2.1}
    J:u=(x_{\sigma(1)},\dots,x_{\sigma(p)})
    +(x_{\sigma(k)}: p+1 \leq k \leq n-r,\ \sigma(k)<\max(u)).
  \end{equation}
  This proves that $I$ has linear quotients.

  We prove the containment ``$\supseteq$'' holds in \eqref{eq_2.1}.
  We first prove $x_{\sigma(k)} u \in J$ for $1 \leq k \leq p$.  In
  this case we have $\lambda_k < \lambda_n -1 < \lambda_n$.  Together
  with the fact that $u\in I$, this implies
  $x^\lambda (x_k/x_n) \in I$ because $I$ is shifted. It follows that
  the monomial
  $$w = u (x_{\sigma(k)}/x_{\sigma(n)}) =
  \sigma (x^\lambda (x_k/x_n))$$ is also in $I$ because $I$ is
  $\sym_n$-fixed. Reasoning as in the existence part of Lemma \ref{1.3}, we have $\pa(w) <_\lex \lambda$, so
  $w \prec u$.  This implies $w\in J$. Therefore we have
  $ x_{\sigma(k)} u =x_{\sigma (n)} w \in J$.

  Next we prove $x_{\sigma (k)} u \in J$ whenever
  $p+1 \leq k \leq n-r$ and $\sigma(k) < \max(u)$.  In this case, the
  monomial $w=u (x_{\sigma(k)}/x_{\max(u)}) \in I$ is obtained from $u$ by
  permuting variables so $\pa (w) = \lambda$. However,
  $\sigma(k) < \max(u)$ implies $w <_\lex u$.  Again we have
  $w\prec u$ and $w\in J$, so $x_{\sigma(k)} u =x_{\max (u)} w \in J$
  as desired.

  We prove the containment ``$\subseteq$'' holds in \eqref{eq_2.1}.
  Let $z \in S$ be a monomial not divisible by any variable in the set
  $$\{x_{\sigma(1)},\dots,x_{\sigma(p)}\}
  \cup \{x_{\sigma(k)}: p+1 \leq k \leq n-r,\ \sigma(k) < \max(u)\}.$$
  We can write $z u$ as
  \begin{equation}
    \label{A}
    z u
    = x_{\sigma(1)}^{\lambda_1} \cdots x_{\sigma(p)}^{\lambda_{p}}
    x_{\sigma(p+1)}^{b_{p+1}} \cdots x_{\sigma(n-r)}^{b_{n-r}}
    x_{\sigma(n-r+1)}^{b_{n-r+1}} \cdots x_{\sigma(n)}^{b_n},
  \end{equation}
  where $b_i \geq \lambda_i \geq \lambda_n-1$ for all $i \geq p+1$ and
  $b_i= \lambda_n-1$ for all $p+1\leq i \leq n-r$ with
  $\sigma(i) < \max(u)$.  We must prove that $z u \notin J$.

  Assume, by contradiction, that $z u \in J$.  By Equation \eqref{A},
  we have that:
  \begin{enumerate}[label=(\alph*)]
  \item\label{item:1} $\lambda$ divides $\pa (zu)$, and
  \item\label{item:2} $\lambda_{\leq p} = \pa (zu)_{\leq p}$.
  \end{enumerate}
  Since $\lambda = \pa (u) \in \Lambda (I)$, Lemma \ref{1.3}
  guarantees that $\lambda$ is the unique partition in $\Lambda (I)$
  satisfying properties \ref{item:1} and \ref{item:2}. Now define the
  ideal $$J'=(v \in G(I): \pa(v)<_\lex \lambda).$$  Note that $J'$ is
  $\sym_n$-fixed and shifted because $I$ is. Moreover
  $\Lambda (J') \subset \Lambda (I)$ and
  $\lambda \in \Lambda (I) \setminus \Lambda (J')$. Hence,
  $zu \notin J'$ by Lemma \ref{1.3}.

  Since $zu \in J$, there is a
  monomial $w \in G(J)$ that divides $z u$. Because $zu \notin J'$, we
  deduce that $w\notin G(J')$ so $\pa (w) \geq_\lex \lambda$. At the
  same time, $w\in G(J)$ gives $w \prec u$, so
  $\pa (w) \leq_\lex \lambda$. This forces $\pa (w) = \lambda$,
  therefore $w = \tau (x^\lambda)$ for some $\tau \in \sym_n$. More
  explicitly, $w$ is of the form
  $$w=x_{\tau(1)}^{\lambda_1} \cdots x_{\tau(p)}^{\lambda_{p}}
  x_{\tau(p+1)}^{\lambda_n-1} \cdots x_{\tau(n-r)}^{\lambda_n-1}
  x_{\tau(n-r+1)}^{\lambda_{n}} \cdots x_{\tau(n)}^{\lambda_n}.$$
  Comparing with Equation \eqref{A}, we get
  \begin{equation}
    \label{Eq_2.3}
    \{\sigma(1),\dots,\sigma(p)\}=\{\tau(1),\dots,\tau(p)\}.
  \end{equation}
  Observe that $\lambda_{\sigma^{-1}(k)}$ and $\lambda_{\tau^{-1}(k)}$
  are the exponents of $x_k$ in $u=\sigma(x^\lambda)$ and
  $w=\tau(x^\lambda)$, respectively. If
  $\lambda_{\sigma^{-1}(k)} = \lambda_{\tau^{-1}(k)}$ for all
  $1\leq k\leq n$, then $u=w$, which contradicts $w\prec u$.
  Therefore it makes sense to define
  $$\ell=\min\{k: \lambda_{\tau^{-1}(k)} \ne \lambda_{\sigma^{-1}(k)}\},$$
  and to write $\ell =\sigma(q)$ for some $1\leq q\leq n$.  By
  Equation \eqref{Eq_2.3}, we have
  $$\{\lambda_{\sigma^{-1}(\ell)},\lambda_{\tau^{-1}(\ell)}\}
  =\{\lambda_n-1,\lambda_n\}.$$ Since
  $w=\tau(x^\lambda) <_\lex \sigma(x^\lambda)=u$ by the definition of
  $\prec$, we actually have $\lambda_{\tau^{-1}(\ell)}=\lambda_n$ and
  $\lambda_q=\lambda_{\sigma^{-1}(\ell)}=\lambda_n-1$.  Also, since
  $w \ne u$, there is $m > \ell$ such that
  $\lambda_{\tau^{-1}(m)} =\lambda_n-1$ and
  $\lambda_{\sigma^{-1}(m)}=\lambda_n$. This shows that
  $$\sigma(q)=\ell < m = \sigma (\sigma^{-1} (m)) \leq\max(u),$$
  and therefore $b_q=\lambda_n-1$.  However, this contradicts the fact
  that $w$ divides $z u$ since the exponent of $x_\ell$ in $w$ is
  $\lambda_n$ but the exponent of $x_\ell=x_{\sigma(q)}$ in $z u$ is
  $b_q=\lambda_n-1$.
\end{proof}

\begin{example}
  \label{exa:shifted_lin_quot}
  Let $I \subset \Bbbk[x_1,x_2,x_3,x_4]$ be the symmetric shifted ideal with
  $$\Lambda(I)=\{(1,1,2,2),(0,2,2,2),(0,1,2,3)\}.$$ The ideal $I$ has 34
  generators. We arrange them in an increasing sequence using the
  order $\prec$ described at the beginning of this section, and we
  denote them $u_1,\dots,u_{34}$. We also set
  $I_{i-1} = (u_1,\dots,u_{i-1})$ for $2\leq i\leq 34$. Table
  \ref{tab:lin_quot_ex} shows all the linear quotients $I_{i-1}:(u_i)$
  of the ideal $I$ in the given order of the generators. All
  computations were performed using Macaulay2 \cite{M2}.

  \begin{table}[!htb]
  \caption{Linear quotients of a symmetric shifted ideal}
    \label{tab:lin_quot_ex}
  \begin{minipage}[t]{0.45\linewidth}
    \renewcommand{\arraystretch}{1.2}
    \centering
    \begin{tabular}{cccc}
      $i$ & $u_i$ & $I_{i-1}:(u_i)$ & $\max(u_i)$\\ \hline\hline
      1 & ${x}_{1}^{2} {x}_{2}^{2} {x}_{3} {x}_{4}$ & - & $2$\\ \hline
      2 & ${x}_{1}^{2} {x}_{2} {x}_{3}^{2} {x}_{4}$ & (${x_2}$) & $3$\\ \hline
      3 & ${x}_{1}^{2} {x}_{2} {x}_{3} {x}_{4}^{2}$ & (${x_2,x_3}$) & $4$\\ \hline
      4 & ${x}_{1} {x}_{2}^{2} {x}_{3}^{2} {x}_{4}$ & (${x_1}$) & $3$\\ \hline
      5 & ${x}_{1} {x}_{2}^{2} {x}_{3} {x}_{4}^{2}$ & (${x_1, x_3}$) & $4$\\ \hline
      6 & ${x}_{1} {x}_{2} {x}_{3}^{2} {x}_{4}^{2}$ & (${x_1, x_2}$) & $4$\\ \hline
      7 & ${x}_{1}^{2} {x}_{2}^{2} {x}_{3}^{2}$ & (${x_4}$) & $3$\\ \hline
      8 & ${x}_{1}^{2} {x}_{2}^{2} {x}_{4}^{2}$ & (${x_3}$) & $4$\\ \hline
      9 & ${x}_{1}^{2} {x}_{3}^{2} {x}_{4}^{2}$ & (${x_2}$) & $4$\\ \hline
      10 & ${x}_{2}^{2} {x}_{3}^{2} {x}_{4}^{2}$ & (${x_1}$) & $4$\\ \hline
      11 & ${x}_{1}^{3} {x}_{2}^{2} {x}_{3}$ & (${x_3, x_4}$) & $1$\\ \hline
      12 & ${x}_{1}^{3} {x}_{2}^{2} {x}_{4}$ & (${x_3, x_4}$) & $1$\\ \hline
      13 & ${x}_{1}^{3} {x}_{2} {x}_{3}^{2}$ & (${x_2, x_4}$) & $1$\\ \hline
      14 & ${x}_{1}^{3} {x}_{2} {x}_{4}^{2}$ & (${x_2, x_3}$) & $1$\\ \hline
      15 & ${x}_{1}^{3} {x}_{3}^{2} {x}_{4}$ & (${x_2, x_4}$) & $1$\\ \hline
      16 & ${x}_{1}^{3} {x}_{3} {x}_{4}^{2}$ & (${x_2, x_3}$) & $1$\\ \hline
      17 & ${x}_{1}^{2} {x}_{2}^{3} {x}_{3}$ & (${x_1, x_3, x_4}$) & $2$\\ \hline
    \end{tabular}
  \end{minipage}
  \begin{minipage}[t]{0.45\textwidth}
    \renewcommand{\arraystretch}{1.2}
    \centering
    \begin{tabular}{cccc}
      $i$ & $u_i$ & $I_{i-1}:(u_i)$ & $\max(u_i)$\\ \hline\hline
    18 & ${x}_{1}^{2} {x}_{2}^{3} {x}_{4}$ & (${x_1, x_3, x_4}$) & $2$\\ \hline
      19 & ${x}_{1}^{2} {x}_{2} {x}_{3}^{3}$ & (${x_1, x_2, x_4}$) & $3$\\ \hline
      20 & ${x}_{1}^{2} {x}_{2} {x}_{4}^{3}$ & (${x_1, x_2, x_3}$) & $4$\\ \hline
      21 & ${x}_{1}^{2} {x}_{3}^{3} {x}_{4}$ & (${x_1, x_2, x_4}$) & $3$\\ \hline
      22 & ${x}_{1}^{2} {x}_{3} {x}_{4}^{3}$ & (${x_1, x_2, x_3}$) & $4$\\ \hline
      23 & ${x}_{1} {x}_{2}^{3} {x}_{3}^{2}$ & (${x_1, x_4}$) & $2$\\ \hline
      24 & ${x}_{1} {x}_{2}^{3} {x}_{4}^{2}$ & (${x_1, x_3}$) & $2$\\ \hline
      25 & ${x}_{1} {x}_{2}^{2} {x}_{3}^{3}$ & (${x_1, x_2, x_4}$) & $3$\\ \hline
      26 & ${x}_{1} {x}_{2}^{2} {x}_{4}^{3}$ & (${x_1, x_2, x_3}$) & $4$\\ \hline
      27 & ${x}_{1} {x}_{3}^{3} {x}_{4}^{2}$ & (${x_1, x_2}$) & $3$\\ \hline
      28 & ${x}_{1} {x}_{3}^{2} {x}_{4}^{3}$ & (${x_1, x_2, x_3}$) & $4$\\ \hline
      29 & ${x}_{2}^{3} {x}_{3}^{2} {x}_{4}$ & (${x_1, x_4}$) & $2$\\ \hline
      30 & ${x}_{2}^{3} {x}_{3} {x}_{4}^{2}$ & (${x_1, x_3}$) & $2$\\ \hline
      31 & ${x}_{2}^{2} {x}_{3}^{3} {x}_{4}$ & (${x_1, x_2, x_4}$) & $3$\\ \hline
      32 & ${x}_{2}^{2} {x}_{3} {x}_{4}^{3}$ & (${x_1, x_2, x_3}$) & $4$\\ \hline
      33 & ${x}_{2} {x}_{3}^{3} {x}_{4}^{2}$ & (${x_1, x_2}$) & $3$\\ \hline
      34 & ${x}_{2} {x}_{3}^{2} {x}_{4}^{3}$ & (${x_1, x_2, x_3}$) & $4$\\ \hline
    \end{tabular}
  \end{minipage}
\end{table}
\end{example}

Using these results together with Theorem \ref{thm:1}, we can give a
formula for graded Betti numbers of symmetric shifted ideals. For a
monomial $u=\sigma(x^\lambda)$, recall our earlier notations:
\begin{equation*}
  \begin{split}
    &p=p(\lambda)=\#\{k: \lambda_k < \lambda_n-1\},\\
    &r=r(\lambda)=\#\{k: \lambda_k=\lambda_n\},\\
    &{\max(u)=\max\{\sigma(k):\lambda_k=\lambda_n\},}
  \end{split}
\end{equation*}
and let
$$C(u)=\{x_{\sigma(1)},\dots,x_{\sigma(p)}\}
\cup \{x_{\sigma(k)}: p+1 \leq k \leq n-r,\ \sigma(k) < \max(u)\}.$$
The next result follows from Theorem \ref{HerTaka} and Equation
\eqref{eq_2.1}.

\begin{corollary}
  \label{cor:betti_shifted}
  Let $I$ be a symmetric shifted ideal. Then for
  all $i,j$ one has
$$\beta_{i,i+j}(I)=\sum_{u \in G(I),\ \deg u=j} \binom{|C(u)|}{i}.$$
\end{corollary}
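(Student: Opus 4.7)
The plan is to combine Theorem \ref{thm:1} (which produces a linear quotients ordering $\prec$ on $G(I)$) with the Herzog–Takayama formula (Theorem \ref{HerTaka}) and then simply read off the size of the colon ideal from Equation \eqref{eq_2.1}. So the whole proof is essentially a chain of identifications, not a new argument.

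Concretely, I would enumerate the minimal generators of $I$ as $u_1 \prec u_2 \prec \cdots \prec u_s$ using the order $\prec$ defined in Section \ref{sec:shifted-ideals-have}. Theorem \ref{thm:1} ensures that $I$ has linear quotients with respect to this ordering, so Theorem \ref{HerTaka} yields
\begin{equation*}
  \beta_{i,i+j}(I) = \sum_{\deg(u_k)=j} \binom{|G((u_1,\dots,u_{k-1}):u_k)|}{i}.
\end{equation*}
For each $k$, the ideal $(u_1,\dots,u_{k-1})$ is exactly the ideal $J = (v \in G(I) : v \prec u_k)$ considered in the proof of Theorem \ref{thm:1}. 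Hence Equation \eqref{eq_2.1} tells us that $(u_1,\dots,u_{k-1}):u_k$ is generated by the variables in $C(u_k)$.

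The only thing that still needs a sentence of justification is that $|G(J:u_k)| = |C(u_k)|$, i.e.\ that the generating set produced by Equation \eqref{eq_2.1} is already minimal. This is immediate: since $\sigma$ is a permutation, the indices $\sigma(1),\dots,\sigma(p),\sigma(p+1),\dots,\sigma(n-r)$ are pairwise distinct, so the elements of $C(u_k)$ are distinct variables and form the unique minimal monomial generating set of $J:u_k$. Substituting $|C(u_k)|$ for $|G((u_1,\dots,u_{k-1}):u_k)|$ and reindexing the sum over $u \in G(I)$ with $\deg u = j$ (instead of over $k$) gives exactly the claimed formula.

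There is no real obstacle here; the work was already done in establishing Theorem \ref{thm:1} and Equation \eqref{eq_2.1}. The one subtlety worth flagging explicitly in the write-up is the distinctness of the variables in $C(u)$, because otherwise a naive reading of Equation \eqref{eq_2.1} might leave open the possibility of redundant generators and hence a strict inequality $|G(J:u)| \leq |C(u)|$; once this is observed the corollary is a one-line consequence of Theorem \ref{HerTaka}.
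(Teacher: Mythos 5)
Your proposal is correct and follows exactly the route the paper takes: the paper derives Corollary \ref{cor:betti_shifted} precisely by combining Theorem \ref{HerTaka} with the identification of the colon ideal in Equation \eqref{eq_2.1}. Your extra remark that the variables in $C(u)$ are pairwise distinct (so that the generating set in \eqref{eq_2.1} is already minimal and $|G(J:u)|=|C(u)|$) is a small point the paper leaves implicit, but it is the same argument.
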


\begin{example}
  Consider the ideal $I$ of Example \ref{exa:shifted_lin_quot}. Using
  Corollary \ref{cor:betti_shifted} and the information in Table
  \ref{tab:lin_quot_ex}, we obtain the following Betti table for $I$.
  \begin{equation*}
    \begin{matrix}
      &0&1&2&3\\\text{total:}&34&72&51&12\\\text{6:}&34&72&51&12\\
    \end{matrix}
  \end{equation*}
\end{example}

\section{Star configurations}
\label{sec:star-configurations}

In this section, we apply the results in the previous section to
symbolic powers of star configurations.  Recall that $I_{n,c}$ is the
monomial ideal of $S = \Bbbk [x_1,\dots,x_n]$ defined by
\begin{equation*}
  I_{n,c} = \bigcap_{1\leq i_1 < \dots < i_c \leq n}
  (x_{i_1}, \dots, x_{i_c})
\end{equation*}
and the $m$-th symbolic power of $I_{n,c}$ is given by
\begin{equation}
  \label{eq:symbolic_star}
  I^{(m)}_{n,c} = \bigcap_{1\leq i_1 < \dots < i_c \leq n}
  (x_{i_1}, \dots, x_{i_c})^m.
\end{equation}
We will show that $I_{n,c}^{(m)}$ is actually a symmetric strongly
shifted ideal.

\begin{proposition}
  \label{pro:gens_symbolic_star}
  For every integer $m\geq 1$, the ideal $I_{n,c}^{(m)}$ is
  $\sym_n$-fixed. Moreover
  \begin{equation*}
    \begin{split}
    &P(I_{n,c}^{(m)}) =
    \{\lambda \in P_n : |\lambda_{\leq c}|\geq m\},\\
    &\Lambda (I_{n,c}^{(m)}) =
    \{\lambda \in P_n : |\lambda_{\leq c}|= m,
    \forall i> c ~ \lambda_i = \lambda_c\}.
  \end{split}
  \end{equation*}
\end{proposition}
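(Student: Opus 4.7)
The plan is to handle the three claims in turn. The $\sym_n$-fixedness is immediate: the symmetric group $\sym_n$ permutes the $c$-element subsets of $\{1,\dots,n\}$ indexing the intersection defining $I_{n,c}^{(m)}$, so $\sigma(I_{n,c}^{(m)})=I_{n,c}^{(m)}$ for every $\sigma\in\sym_n$.

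For the description of $\P(I_{n,c}^{(m)})$, the key auxiliary observation is that a monomial $x^\lambda$ belongs to $(x_{i_1},\dots,x_{i_c})^m$ if and only if $\lambda_{i_1}+\cdots+\lambda_{i_c}\geq m$. The ``if'' part is a routine combinatorial fact: one can split $m$ as $m=a_1+\cdots+a_c$ with $0\leq a_j\leq\lambda_{i_j}$, and then $x_{i_1}^{a_1}\cdots x_{i_c}^{a_c}$ is a generator of $(x_{i_1},\dots,x_{i_c})^m$ dividing $x^\lambda$; the ``only if'' part is clear since each generator $x_{i_1}^{a_1}\cdots x_{i_c}^{a_c}$ with $\sum a_j=m$ dividing $x^\lambda$ forces $\sum\lambda_{i_j}\geq m$. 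Combined with \eqref{eq:symbolic_star}, this says $x^\lambda\in I_{n,c}^{(m)}$ if and only if $\lambda_{i_1}+\cdots+\lambda_{i_c}\geq m$ for every $c$-subset. When $\lambda$ is a partition (so non-decreasing), the minimum of $\lambda_{i_1}+\cdots+\lambda_{i_c}$ over $c$-subsets is achieved by the $c$ smallest indices, yielding $|\lambda_{\leq c}|$. This gives the stated description of $\P(I_{n,c}^{(m)})$.

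For $\Lambda(I_{n,c}^{(m)})$, I would use the fact (following from Lemma \ref{1.1}) that $x^\lambda\in G(I_{n,c}^{(m)})$ if and only if $\lambda$ is minimal in the poset $\P(I_{n,c}^{(m)})$ with respect to divisibility of partitions. So the task is to show that a partition $\lambda$ with $|\lambda_{\leq c}|\geq m$ is minimal exactly when $|\lambda_{\leq c}|=m$ and $\lambda_i=\lambda_c$ for all $i>c$. For the forward direction, if $|\lambda_{\leq c}|>m$ or if some $\lambda_i>\lambda_c$ with $i>c$, I would produce a strictly smaller partition still in $\P(I_{n,c}^{(m)})$ by decreasing a single coordinate by one: pick the smallest index $i$ at which $\lambda_i>\lambda_{i-1}$ (with $\lambda_0=0$) and replace $\lambda_i$ by $\lambda_i-1$. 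If $|\lambda_{\leq c}|>m$ this procedure works at any such index with $i\leq c$ (or $i>c$), and the new sum is still $\geq m$; if instead some $\lambda_i>\lambda_c$ for $i>c$, one chooses the smallest such $i$ and decreases $\lambda_i$, leaving $|\lambda_{\leq c}|$ unchanged. For the backward direction, suppose $\lambda$ satisfies the stated conditions and $\mu\unlhd_{\mathrm{div}}\lambda$ with $\mu\in\P(I_{n,c}^{(m)})$; then $\mu_i\leq\lambda_i$ for all $i$, and $|\mu_{\leq c}|\geq m=|\lambda_{\leq c}|$ forces equalities $\mu_i=\lambda_i$ for $i\leq c$, while the constraint $\mu_i\leq\lambda_i=\lambda_c=\mu_c$ combined with the partition property $\mu_c\leq\mu_i$ forces $\mu_i=\lambda_i$ for $i>c$, so $\mu=\lambda$.

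The main obstacle is the careful bookkeeping in the minimality argument, specifically ensuring that the single-coordinate decrements produce bona fide partitions (non-decreasing sequences) — this is why one must decrement at an index $i$ with $\lambda_i>\lambda_{i-1}$ rather than an arbitrary position. Everything else is straightforward once the combinatorial translation of membership in $(x_{i_1},\dots,x_{i_c})^m$ is in place.
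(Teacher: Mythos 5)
Your proposal is correct and follows essentially the same route as the paper: membership in $(x_{i_1},\dots,x_{i_c})^m$ is detected by $\lambda_{i_1}+\cdots+\lambda_{i_c}\geq m$, the minimum over $c$-subsets of a non-decreasing sequence is $|\lambda_{\leq c}|$, and the minimal generators are identified by checking which single-entry decrements stay in $\P(I_{n,c}^{(m)})$. Your write-up merely makes explicit the bookkeeping (re-sorting after a decrement, and the uniqueness argument for the backward direction) that the paper's proof leaves implicit.
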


\begin{proof}
  Equation \eqref{eq:symbolic_star} immediately implies that
  $I_{n,c}^{(m)}$ is $\sym_n$-fixed because each element of $\sym_n$
  acts by permuting the primary components of the ideal.

  If $x^\lambda \in I_{n,c}^{(m)}$, then
  $x^\lambda \in (x_1,\dots,x_c)^m$, which gives
  $|\lambda_{\leq c}|\geq m$. Conversely, if
  $|\lambda_{\leq c}|\geq m$, then for all
  $1\leq i_1 < \dots < i_c \leq n$ and all $1\leq j\leq c$, we have
  $\lambda_j \leq \lambda_{i_j}$ because $\lambda$ is a
  partition. This implies
  \begin{equation*}
    \sum_{j=1}^c \lambda_{i_j} \geq \sum_{j=1}^c \lambda_j
    = |\lambda_{\leq c}|\geq m.
  \end{equation*}
  Hence, $x^\lambda \in (x_{i_1}, \dots, x_{i_c})^m$. Thus, the
  statement about $P (I_{n,c}^{(m)})$ is proved.

  Now the partition $\lambda = (\lambda_1, \dots, \lambda_n)$ is in
  $\Lambda (I_{n,c}^{(m)})$ if and only if
  $\lambda \in P (I_{n,c}^{(m)})$ and the partition obtained from
  $\lambda$ by decreasing any $\lambda_i$ is not in
  $P (I_{n,c}^{(m)})$. This forces $|\lambda_{\leq c}|= m$ and
  $\forall i> c$ $\lambda_i = \lambda_c$.
\end{proof}

\begin{example}
  \label{6:3:5}
  Figure \ref{fig:1} illustrates generators of $I_{n,c}^{(m)}$ when
  $n=6$, $c=3$ and $m=5$.

  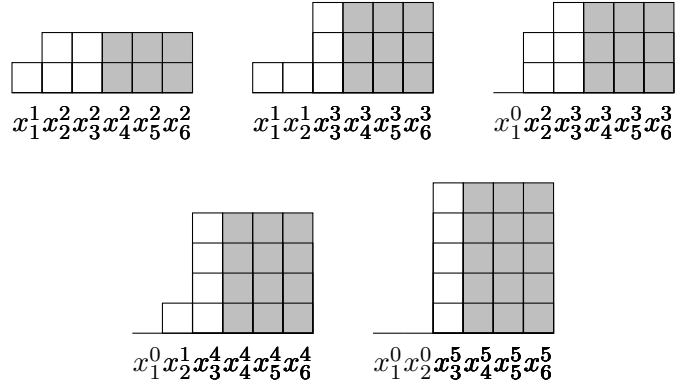
\begin{figure}[ht]
    \centering
    \begin{tikzpicture}[scale=0.4]
      \begin{scope}[shift={(0,8)}]
        \fill[lightgray] (4,1) rectangle (7,3);
        \foreach \p [count=\x] in {1,2,2,2,2,2}
        \foreach \y in {0,...,\p}
        {
          \draw (\x,1) rectangle (\x+1,\y+1);
          \draw (\x+0.5,0) node {$x_{\x}^{\p}$};
        }
      \end{scope}
      \begin{scope}[shift={(8,8)}]
        \fill[lightgray] (4,1) rectangle (7,4);
        \foreach \p [count=\x] in {1,1,3,3,3,3}
        \foreach \y in {0,...,\p}
        {
          \draw (\x,1) rectangle (\x+1,\y+1);
          \draw (\x+0.5,0) node {$x_{\x}^{\p}$};
        }
      \end{scope}
      \begin{scope}[shift={(16,8)}]
        \fill[lightgray] (4,1) rectangle (7,4);
        \foreach \p [count=\x] in {0,2,3,3,3,3}
        \foreach \y in {0,...,\p}
        {
          \draw (\x,1) rectangle (\x+1,\y+1);
          \draw (\x+0.5,0) node {$x_{\x}^{\p}$};
        }
      \end{scope}
      \begin{scope}[shift={(4,0)}]
        \fill[lightgray] (4,1) rectangle (7,5);
        \foreach \p [count=\x] in {0,1,4,4,4,4}
        \foreach \y in {0,...,\p}
        {
          \draw (\x,1) rectangle (\x+1,\y+1);
          \draw (\x+0.5,0) node {$x_{\x}^{\p}$};
        }
      \end{scope}
      \begin{scope}[shift={(12,0)}]
        \fill[lightgray] (4,1) rectangle (7,6);
        \foreach \p [count=\x] in {0,0,5,5,5,5}
        \foreach \y in {0,...,\p}
        {
          \draw (\x,1) rectangle (\x+1,\y+1);
          \draw (\x+0.5,0) node {$x_{\x}^{\p}$};
        }
      \end{scope}
    \end{tikzpicture}
    \caption{Partitions and monomials generating $I_{6,3}^{(5)}$.}
    \label{fig:1}
  \end{figure}
\end{example}

We now discuss the main result of this section.

\begin{theorem}
\label{thm:mainresult}
  For every integer $m\geq 1$, the ideal $I^{(m)}_{n,c}$ is strongly
  shifted.
\end{theorem}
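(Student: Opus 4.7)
The plan is to invoke the dominance-order characterization from the second remark after Definition \ref{def:shifted}: a symmetric monomial ideal $I$ is strongly shifted if and only if, for every pair $\lambda,\mu \in \P_n$ with $|\mu|=|\lambda|$ and $\mu \unlhd \lambda$, the condition $\lambda \in \P(I)$ forces $\mu \in \P(I)$. This characterization is perfectly suited to our setting because Proposition \ref{pro:gens_symbolic_star} describes $\P(I_{n,c}^{(m)})$ by a single threshold inequality, namely $|\lambda_{\leq c}|\geq m$.

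Given this, the remaining content is purely combinatorial: dominance in the sense of the paper can only increase the sum of the smallest $c$ parts. Indeed, applying the defining inequality of $\mu \unlhd \lambda$ at index $k=c+1$ gives $\mu_{c+1}+\cdots+\mu_n \leq \lambda_{c+1}+\cdots+\lambda_n$, and subtracting this from $|\mu|=|\lambda|$ yields $|\mu_{\leq c}| \geq |\lambda_{\leq c}|$. Therefore, if $\lambda \in \P(I_{n,c}^{(m)})$ and $\mu \unlhd \lambda$ with $|\mu|=|\lambda|$, one has $|\mu_{\leq c}| \geq |\lambda_{\leq c}| \geq m$, so $\mu \in \P(I_{n,c}^{(m)})$ by Proposition \ref{pro:gens_symbolic_star}.

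There is essentially no obstacle along this route; the theorem becomes a one-line consequence of the threshold description of $\P(I_{n,c}^{(m)})$ together with the dominance-order remark. An alternative would be to apply Lemma \ref{1.2bis} directly: fix $\lambda \in \Lambda(I_{n,c}^{(m)})$ (so $|\lambda_{\leq c}|=m$ and $\lambda_{c+1}=\cdots=\lambda_n=\lambda_c$) and indices $k<l$ with $\lambda_k<\lambda_l$, and verify that the partition associated to $x^\lambda(x_k/x_l)$ still satisfies $|\cdot_{\leq c}|\geq m$ by a short case split on whether $l\leq c$ or $l>c$ (the case $k,l>c$ being vacuous). This works but introduces bookkeeping without new insight, whereas the dominance-order formulation handles all cases uniformly.
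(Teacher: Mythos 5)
Your proposal is correct and follows essentially the same route as the paper: both reduce the problem to the threshold description of $\P(I_{n,c}^{(m)})$ in Proposition \ref{pro:gens_symbolic_star} and observe that the quantity $|\lambda_{\leq c}|$ cannot decrease under the relevant operation. The only difference is cosmetic: the paper checks the single-swap condition of Definition \ref{def:shifted} directly and simply asserts $|\pa(x^\lambda(x_k/x_l))_{\leq c}| \geq |\lambda_{\leq c}|$, whereas you route through the dominance-order remark, which makes that inequality an immediate consequence of the defining inequality at index $c+1$ together with $|\mu|=|\lambda|$ — a slightly more transparent way to see the same fact.
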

\begin{proof}
  Let $\lambda = (\lambda_1, \dots, \lambda_n) \in P(I_{n,c}^{(m)})$
  and $1\leq k < l \leq n$ with $\lambda_k < \lambda_l$. Define
  $v = x^\lambda (x_k / x_l)$ and $\mu = \pa (v)$. We have
  $|\mu_{\leq c}| \geq |\lambda_{\leq c}|$. Thus, by Proposition
  \ref{pro:gens_symbolic_star}, we get $|\mu_{\leq c}| \geq m$ and
  $v \in I_{n,c}^{(m)}$.
\end{proof}

Using Corollary \ref{cor:betti_shifted}, we can derive several
consequences for the Betti tables of the ideals $I^{(m)}_{n,c}$.

\begin{corollary}
  \label{cor:partial_betti_star}\
  \begin{enumerate}
  \item The degree $j$ strand in the Betti table of $I_{n,c}^{(m)}$ is
    nonzero only when $j$ belongs to $\{m+k(n-c): k=1,2,\dots,m\}$,
    that is,
  $$\beta_{i,i+j}(I_{n,c}^{(m)})=0 \ \ \ \mbox{ for all $i\geq 0$ and $j \not \in \{m+k(n-c): k=1,2,\dots,m\}$}.$$
\item For all $i \geq 0$ and $k \geq \frac m 2 +1$, one has
  $$\beta_{i,i+m+k(n-c)}(I_{n,c}^{(m)})= \binom{c-1}{i}
  \beta_{0,m+k(n-c)}(I_{n,c}^{(m)}).$$ In particular,
  $\beta_{i,i+m+k(n-c)}(I_{n,c}^{(m)})$ only depends on the number of
  generators of $I_{n,c}^{(m)}$ of degree $m+k(n-c)$ when
  $k \geq \frac m 2 +1$.
\item The Castelnuovo-Mumford regularity of $I^{(m)}_{n,c}$ is
  $m (1+n-c)$. Moreover, if $m \geq 2$, then the bottom row in the
  Betti table of $I_{n,c}^{(m)}$ is given by the following formula:
  \begin{equation*}
    \beta_{i,i+m(1+n-c)}(I^{(m)}_{n,c})= \binom{n}{c-1} \binom{c-1}{i} \ \ \ \mbox{ for all $i\geq 0$}.
  \end{equation*}
\item If $m\leq c$, then
  \begin{equation*}
    \beta_{i,i+m+n-c}(I^{(m)}_{n,c})= \binom{n}{c-m-i} \binom{m+n-c+i-1}{i} \ \ \ \mbox{ for all $i\geq 0$}.
  \end{equation*}
\item All nonzero rows in the Betti table of $I^{(m)}_{n,c}$ have
  length $c-1$, with the exception of the top one.
\end{enumerate}
\end{corollary}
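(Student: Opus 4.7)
The plan is to deduce each of the five statements from the Betti number formula in Corollary~\ref{cor:betti_shifted}, applied to the generators of $I_{n,c}^{(m)}$ described in Proposition~\ref{pro:gens_symbolic_star}. Every such generator takes the form $u = \sigma(x^\lambda)$, where $\lambda \in \Lambda(I_{n,c}^{(m)})$ satisfies $\lambda_c = \lambda_n = k$, $\lambda_i = k$ for $i \geq c$, and $\lambda_1 + \cdots + \lambda_c = m$; consequently $\deg u = m + k(n-c)$ and $\lambda_{c-1} \leq m-k$. I would then analyze the three statistics $p(\lambda)$, $r(\lambda)$, and $\max(u)$ that together determine $|C(u)|$.

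Parts (1), (2), (3), and (5) will follow by careful bookkeeping. Part (1) is immediate, since Corollary~\ref{cor:betti_shifted} forces $\beta_{i,i+j}$ to vanish outside generator degrees. For part (2), the hypothesis $k \geq m/2 + 1$ is equivalent to $m - k \leq k - 2$, which forces $p(\lambda) = c-1$, $r(\lambda) = n-c+1$, and the absence of middle indices; hence $|C(u)| = c-1$ uniformly over the orbit of $x^\lambda$. Part (3) then combines parts (1) and (2) with the fact that the unique partition at $k=m$ is $(0,\dots,0,m,\dots,m)$, whose orbit has size $\binom{n}{c-1}$. For part (5), the universal inequality $r(\lambda) \geq n-c+1$ bounds $|C(u)|$ by $c-1$, and for every non-top strand $k \geq \lceil m/c \rceil + 1$ the inequality $(c-1)(k-1) \geq m-k$ holds; this lets me exhibit a partition with $\lambda_{c-1} \leq k-1$, and applying the identity permutation to $x^\lambda$ produces a generator with $|C(u)| = c-1$, showing the row has length exactly $c-1$.

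The main obstacle will be part (4), the only step requiring an actual calculation. The $k=1$ strand (which necessitates $m \leq c$) contains a single orbit, that of $x_{c-m+1} \cdots x_n$, parameterized by the set $Z \subseteq [n]$ of size $c-m$ recording the positions of zero exponents. A direct analysis will yield $|C(u)| = \#(Z \cap [j^*-1])$, where $j^* = \max([n] \setminus Z)$. Grouping the orbit by the value of $j^*$ and substituting $a = j^* - (n-c+m)$, Corollary~\ref{cor:betti_shifted} reduces the Betti number to
\[
\beta_{i,i+m+n-c}(I_{n,c}^{(m)}) = \sum_{a=0}^{c-m}\binom{a+n-c+m-1}{a}\binom{a}{i}.
\]
I would then apply the identity $\binom{a+N}{a}\binom{a}{i} = \binom{N+i}{i}\binom{a+N}{N+i}$ with $N = n-c+m-1$, factor out $\binom{N+i}{i} = \binom{m+n-c+i-1}{i}$, and collapse the remaining sum via the hockey stick identity to $\binom{n}{c-m-i}$. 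This binomial manipulation is the one genuinely computational step; everything else proceeds by translating the structural results into counting statements.
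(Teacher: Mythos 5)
Your proposal is correct and, for parts (1), (2), (3), and (5), follows essentially the same route as the paper: everything is read off from Corollary~\ref{cor:betti_shifted} by computing $|C(u)|$ for the generators described in Proposition~\ref{pro:gens_symbolic_star} (in particular, your reduction $k\geq m/2+1 \Leftrightarrow \lambda_{c-1}\leq m-k\leq k-2$ forcing $p(\lambda)=c-1$ and $n-r(\lambda)=c-1$ is the paper's argument for (2) and (3), and your bound $|C(u)|\leq n-r(\lambda)\leq c-1$ plus an explicit witness with $\lambda_{c-1}\leq k-1$ is the paper's argument for (5)). The one genuine divergence is part (4): the paper observes that the degree-$(m+n-c)$ generators are exactly the squarefree monomials of that degree and simply imports the Betti numbers of the squarefree Veronese ideal from \cite[Thm.~2.1]{Ga}, whereas you recompute them from scratch by stratifying the orbit according to $|C(u)|=a$, counting $\binom{n-c+m+a-1}{a}$ monomials in each stratum, and collapsing $\sum_a \binom{a+N}{a}\binom{a}{i}$ via trinomial revision and the hockey-stick identity. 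Your calculation checks out and has the side benefit of re-deriving Galetto's formula directly from the linear-quotient order of Theorem~\ref{thm:1}; the paper's citation is shorter. A small bonus: your range $k\geq\lceil m/c\rceil+1$ in part (5) correctly covers every non-top row, including $k=m/c+1$ when $c$ divides $m$, a case that the paper's stated range $q+1<k\leq m$ technically omits.
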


\begin{proof}
  It follows from Proposition \ref{pro:gens_symbolic_star}, that any
  element in $G (I_{n,c}^{(m)})$ has degree $m+k(n-c)$ with
  $1 \leq k \leq m$. Then Corollary \ref{cor:betti_shifted} proves
  (1).

Next we prove (2) and (3). Let $u \in G(I_{n,c}^{(m)})$ be a monomial
of degree greater than or equal to $m+(m/2+1)(n-c)$.  Corollary
\ref{cor:betti_shifted} says that, to prove (2), it is enough to
show that $|C(u)|=c-1$. By Proposition \ref{pro:gens_symbolic_star},
$u$ must be a monomial of the form
  $$u=\sigma(x_1^{\lambda_1} \cdots x_{c-1} ^{\lambda_{c-1}}x_c^{\lambda_c} x_{c+1}^{\lambda_c} \cdots x_n^{\lambda_c})$$
  with $\lambda_1+ \cdots + \lambda_c=m$, $\lambda_c\geq (m/2+1)$ and
  $\sigma \in \sym_n$. This says
  $\lambda_{c-1} \leq m/2-1 \leq \lambda_c-2$ and therefore
  $C(u)=\{x_{\sigma(1)},\dots,x_{\sigma(c-1)}\}$, as desired.  Then
  statement (3) follows from (2) together with the fact that the
  monomials of degree $m(1+m-c)$ in $G (I_{n,c}^{(m)})$ are precisely
  those in the set
  $\{\sigma(x_c^m x_{c+1}^m \cdots x_n^m): \sigma \in \sym_n\}.$

  To prove (4), let $m\leqslant c$.  It is easy to see that the minimum
  degree of monomials in $G (I_{n,c}^{(m)})$ is $m+n-c$. Also, the monomials of degree $m+n-c$ in $G (I_{n,c}^{(m)})$ are precisely those in the set
  \begin{equation*}
    X=\{\sigma(x_{c-m+1} x_{c-m+2} \dots x_n): \sigma \in \sym_n\}.
  \end{equation*}
  The monomials in $X$ are solely responsible for the top row in the
  Betti table of $I_{n,c}^{(m)}$. Note that $X$ is the set of all
  squarefree monomials of degree $m+n-c$. The Betti numbers of the
  ideal generated by $X$ can be found in\cite[Thm.\ 2.1]{Ga}, leading to the formula in (4).

  Finally, to see (5), let $q$ be the quotient of $m$ divided by
  $c$. Observe that for every $q+1 < k \leq m$, there is a monomial
  $u_k \in G(I_{n,c}^{(m)})$ with $\deg (u_k)=m+k(n-c)$ of the form
  $$u_k=x_1^{\lambda_1}\cdots x_{c-1}^{\lambda_{c-1}} x_c^k \cdots x_n^k$$
  with $\lambda_1 \leq \cdots \leq \lambda_{c-1} \leq k-1$. Then
  $C(u_k)=\{x_1,\dots,x_{c-1}\}$ and, by Corollary
  \ref{cor:betti_shifted}, row $m+k(n-c)$ in the Betti table of
  $I_{n,c}^{(m)}$ has length $c-1$.
\end{proof}

The previous corollary immediately gives a closed formula for the
Betti numbers of symbolic squares of star configurations. Note that
this result was first established in \cite[Theorem 3.2]{GHM}. Our
assumption about codimension eliminates degenerate cases where some
minimal generators disappear along with corresponding rows of the
Betti table.

\begin{corollary}
\label{cor:symbolic_square}
  If $c\geqslant 2$, then
  \begin{equation*}
    \beta_{i,i+j} (I^{(2)}_{n,c}) =
    \begin{cases}
      \binom{n}{c-2-i} \binom{n-c+1+i}{i},
      &j=n-c+2,\\
      \binom{n}{c-1} \binom{c-1}{i}, &j=2(n-c+1).
    \end{cases}
  \end{equation*}
\end{corollary}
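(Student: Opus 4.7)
The plan is to observe that the statement is a direct specialization of Corollary \ref{cor:partial_betti_star} to the case $m=2$, using the strongly shifted property established in Theorem \ref{thm:mainresult} as the engine behind that corollary. Since $I_{n,c}^{(2)}$ is strongly shifted, and in particular symmetric shifted, the entire Betti table is controlled by the data already tabulated in Corollary \ref{cor:partial_betti_star}.

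First, I would invoke part (1) of Corollary \ref{cor:partial_betti_star} with $m=2$ to reduce the problem to exactly two strands: the set $\{m+k(n-c): k=1,2,\dots,m\}$ collapses to $\{n-c+2,\ 2(n-c+1)\}$, so every other strand vanishes and there is nothing to prove outside these two values of $j$. Then I would handle the two remaining strands separately.

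For the bottom strand $j = 2(n-c+1)$, I would apply part (3) of Corollary \ref{cor:partial_betti_star}. Its hypothesis $m \geq 2$ is met with equality, and plugging $m=2$ into $\binom{n}{c-1}\binom{c-1}{i}$ gives precisely the second case of the stated formula. For the top strand $j = n-c+2$, I would apply part (4) of the same corollary, whose hypothesis $m \leq c$ is exactly the assumption $c \geq 2$. Substituting $m=2$ into $\binom{n}{c-m-i}\binom{m+n-c+i-1}{i}$ yields $\binom{n}{c-2-i}\binom{n-c+1+i}{i}$, matching the first case.

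Because the proof is pure specialization, there is essentially no obstacle of substance; the only point that warrants attention is verifying that the two strands $n-c+2$ and $2(n-c+1)$ are distinct so that parts (3) and (4) do not overlap and create a consistency issue, which is immediate from $n-c+2 < 2(n-c+1)$ whenever $n-c > 0$ (and if $n=c$, both formulas collapse but remain compatible). The hypothesis $c \geq 2$ serves a dual role: it ensures $m \leq c$ for part (4) to apply, and it prevents the degenerate configuration in which no nontrivial top-strand generators exist.
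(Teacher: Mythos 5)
Your proposal is correct and is exactly the paper's route: the paper dispatches this corollary with the single remark that it ``immediately'' follows from Corollary \ref{cor:partial_betti_star}, and your specialization of parts (1), (3), (4) to $m=2$ (with $c\geq 2$ supplying the hypothesis $m\leq c$ for part (4)) is that immediate deduction spelled out. One small inaccuracy in your parenthetical aside: when $n=c$ the two formulas do \emph{not} remain compatible (e.g.\ the first gives $\binom{n}{n-2-i}(i+1)$ while the second gives $n\binom{n-1}{i}$, which differ), so the statement tacitly requires $n>c$; this does not affect the validity of the proof in the intended nondegenerate range, but the parenthetical claim should simply be dropped.
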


Using Corollary \ref{cor:betti_shifted}, we can even give a closed
formula for the Betti numbers of the symbolic cube of star
configurations.

\begin{corollary}
  \label{cor:symbolic_cube}
  If $c\geqslant 3$, then
  \begin{equation*}
    \beta_{i,i+j} (I^{(3)}_{n,c}) =
    \begin{cases}
      \binom{n}{c-3-i} \binom{n-c+2+i}{i},
      &j=n-c+3,\\
      \binom{n}{c-2} \left(\binom{c-2}{i}+(n-c+1) \binom{c-1}{i}
      \right),
      &j=2(n-c+1)+1,\\
      \binom{n}{c-1} \binom{c-1}{i}, &j=3(n-c+1).
    \end{cases}
  \end{equation*}
\end{corollary}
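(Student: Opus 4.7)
The plan is to combine Corollary \ref{cor:betti_shifted} with the earlier parts of Corollary \ref{cor:partial_betti_star}. By part (1) applied with $m = 3$, the Betti table of $I^{(3)}_{n,c}$ is supported in at most the three degrees $j = n - c + 3$, $j = 2(n-c+1)+1$, and $j = 3(n-c+1)$. The top row ($j = n-c+3$) matches the content of part (4) of the preceding corollary, available because $m = 3 \leq c$; the bottom row ($j = 3(n-c+1)$) matches part (3), available because $m = 3 \geq 2$. Both claims are verbatim copies of what is already proved. So the real work is to evaluate the middle row.

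For that row I would first identify the relevant partition generators via Proposition \ref{pro:gens_symbolic_star}. Any $\lambda \in \Lambda(I^{(3)}_{n,c})$ of degree $2(n-c+1)+1$ must satisfy $\lambda_i = \lambda_c$ for $i > c$ and $|\lambda_{\leq c}| = 3$ with $\lambda_c = 2$. These conditions force $\lambda = \nu := (0,\dots,0,1,2,\dots,2)$, with $c-2$ zeros, one $1$, and $n-c+1$ twos; this is the unique partition generator in this degree. The monomial generators of this degree are therefore precisely the monomials in $\sym_n \cdot x^\nu$. Since $p(\nu) = c-2$ and $r(\nu) = n-c+1$, the range $p+1 \leq k \leq n - r$ in the definition of $C(u)$ for $u = \sigma(x^\nu)$ collapses to the single value $k = c-1$. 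Consequently $|C(u)| \in \{c-2, c-1\}$, with $|C(u)| = c-1$ precisely when $\sigma(c-1) < \max(u)$.

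What remains is a counting step. I would parametrize a monomial in $\sym_n \cdot x^\nu$ by the pair $(V_0, v_1)$, where $V_0 \subset \{1,\dots,n\}$ is the $(c-2)$-subset of indices with exponent $0$ in $u$ and $v_1 \in \{1,\dots,n\}\setminus V_0$ is the index with exponent $1$; then $\max(u)$ is the largest element of $\{1,\dots,n\}\setminus(V_0 \cup \{v_1\})$. The condition $\sigma(c-1) < \max(u)$ translates to $v_1$ not being the largest element of $\{1,\dots,n\}\setminus V_0$, giving $n-c+1$ such $v_1$ and exactly one failing $v_1$, for each of the $\binom{n}{c-2}$ choices of $V_0$. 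Summing $\binom{|C(u)|}{i}$ via Corollary \ref{cor:betti_shifted} then yields
\begin{equation*}
  \binom{n}{c-2}\binom{c-2}{i} + \binom{n}{c-2}(n-c+1)\binom{c-1}{i},
\end{equation*}
matching the stated middle row. The argument is essentially a careful enumeration; the main subtlety is correctly translating the condition $\sigma(c-1) < \max(u)$ into the count on $(V_0, v_1)$, but I do not foresee a serious obstacle beyond that.
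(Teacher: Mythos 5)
Your proposal is correct and follows essentially the same route as the paper: invoke parts (3) and (4) of Corollary \ref{cor:partial_betti_star} for the top and bottom rows, identify the unique partition generator $\nu=(0^{c-2},1,2^{n-c+1})$ in the middle degree, observe that the range $p+1\leq k\leq n-r$ collapses to $k=c-1$ so $|C(u)|\in\{c-2,c-1\}$, and count orbits according to whether $\sigma(c-1)<\max(u)$. The paper's counting is phrased directly in terms of choosing zero-degree variables and then the degree-one variable, but this is the same enumeration as your $(V_0,v_1)$ parametrization.
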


\begin{proof}
  The top and bottom row of the Betti table are computed as in
  Corollary \ref{cor:partial_betti_star}.

  By Proposition \ref{pro:gens_symbolic_star}, the minimal generators
  of $I^{(3)}_{n,c}$ with degree $2(n-c+1)+1$ are the ones in the set
  \begin{equation*}
    \{\sigma(x_{c-1} x_{c}^2 x_{c+1}^2 \cdots x_n^2): \sigma \in \sym_n\}.
  \end{equation*}
  In particular, these are monomials $u=\sigma (x^\lambda)$ with
  \begin{equation*}
    \lambda = (\underbrace{0,\dots,0}_{c-2}, 1,
    \underbrace{2,\dots,2}_{n-c+1}),
  \end{equation*}
  and $p(\lambda) = c-2$, $r(\lambda) = n-c+1$. We also have
  $n-c+1 \leq \max (u)\leq n$. It follows that $|C(u)|=c-2$ if
  $\sigma (c-1) \geq \max (u)$, and $|C(u)|=c-1$ if
  $\sigma (c-1) < \max (u)$. We count how many monomials we have in
  each case.

  To produce a monomial $u$ with $|C(u)|=c-2$, we can first choose
  which variables have degree zero. Among the remaining variables, the
  single one having degree one must appear last. This can be
  accomplished in $\binom{n}{c-2}$ ways.

  To produce a monomial $u$ with $|C(u)|=c-1$, we can first choose
  which variables have degree zero. Next we can choose any one the
  remaining variables except the last one to appear with degree
  one. This can be accomplished in $\binom{n}{c-2} (n-c+1)$ ways.

  The statement now follows from Corollary \ref{cor:betti_shifted}.
\end{proof}

\begin{example}
  The Betti table of $I^{(3)}_{9,4}$ is
  \begin{equation*}
    \begin{matrix}
      &0&1&2&3\\
      \text{total:}&345&980&936&300\\
      \text{8:}&9&8&\text{.}&\text{.}\\
      \text{9:}&\text{.}&\text{.}&\text{.}&\text{.}\\
      \text{10:}&\text{.}&\text{.}&\text{.}&\text{.}\\
      \text{11:}&\text{.}&\text{.}&\text{.}&\text{.}\\
      \text{12:}&\text{.}&\text{.}&\text{.}&\text{.}\\
      \text{13:}&252&720&684&216\\
      \text{14:}&\text{.}&\text{.}&\text{.}&\text{.}\\
      \text{15:}&\text{.}&\text{.}&\text{.}&\text{.}\\
      \text{16:}&\text{.}&\text{.}&\text{.}&\text{.}\\
      \text{17:}&\text{.}&\text{.}&\text{.}&\text{.}\\
      \text{18:}&84&252&252&84\\
    \end{matrix}
  \end{equation*}
\end{example}

\begin{remark}
  It follows immediately from Corollary \ref{cor:symbolic_cube} that
  the third symbolic defect of $I_{n,c}$ is $\binom{n}{c-2}
  (n-c+2)$. We refer the reader to \cite{GGSVT18} for the definition
  of symbolic defect and \cite[Corollary 3.17]{GGSVT18} for a
  previously known bound.
\end{remark}

\begin{remark}
  \label{rem:fat_points}
  The ideal $I_{n,n-1}$ can be thought of as the defining ideal of the
  set of the $n$ points
  \begin{equation*}
    e_1 = [1:0:0:\ldots:0], e_2 = [0:1:0:\ldots:0],\dots,
    e_n = [0:0:0:\ldots:1] \in \mathbb{P}^{n-1}_\Bbbk
  \end{equation*}
  where $\mathbb{P}^{n-1}_\Bbbk$ denotes the $(n-1)$-dimensional
  projective space over $\Bbbk$. Similarly, $I_{n,n-1}^{(m)}$ can be
  thought of as the ideal defining the fat point scheme
  $m e_1 + m e_2 + \dots + m e_n$. For an introduction to fat points,
  we invite the reader to consult \cite{CH14}. If
  $\{p_1,\dots,p_n\} \subset \mathbb{P}^{n-1}_\Bbbk$ is a set of $n$
  points in general linear position, then there is a linear
  automorphism of $\mathbb{P}^{n-1}_\Bbbk$ taking $e_i$ to
  $p_i$. Algebraically, this corresponds to an invertible linear
  change of coordinates that preserves Betti numbers. In particular,
  it follows that the results of Corollary
  \ref{cor:partial_betti_star} provide information about the Betti
  numbers of the fat point scheme $m p_1 + m p_2 + \dots + m p_n$ in
  $\mathbb{P}^{n-1}_\Bbbk$. For more complete information, the Betti
  numbers of this fat point scheme can be computed by combining
  Proposition \ref{pro:gens_symbolic_star} and our later Corollary
  \ref{2.5}. We are grateful to Brian Harbourne for clarifying this
  connection.
\end{remark}

\section{Decompositions of symmetric shifted ideals}
\label{sec:decomp-symm-shift}

For $r \leqslant n$, let $J_{[n], r}$ be the ideal of
$S=\Bbbk[x_1,\dots,x_n]$ generated by all squarefree monomials of
degree $r$.  The ideal $J_{[n],r}$ is actually the same as the ideal
$I_{n,n+1-r}$ in the previous section, but we introduce a new notation
to simplify the proofs in Sections \ref{sec:decomp-symm-shift} and
\ref{sec:equiv-betti-numb}.  Note that $J_{[n], r}$ is $\sym_n$-fixed
and shifted. Its equivariant resolution has been described in
\cite[Theorem 4.11]{Ga}. In the following two sections we extend this
result to an arbitrary symmetric shifted ideal $I$ (see
Proposition~\ref{3.1}). This will be done in two steps. In this
section, we establish a coarse decomposition of
$\Tor_i(I,\Bbbk)_{i+d}$ (see Theorem~\ref{2.4}). This will be refined
in the next section.

We need some further notation.  Let $\Mon(S)$ be the set of all
monomials in $S$.  For monomial ideals $I \supset J$ of $S$, we write
$$\Mon(I/J)=\{u\in \Mon(S):u\in I,\ u \not \in J\}.$$
When both $I$ and $J$ are $\sym_n$-fixed, we write
$$\P(I/J)=\{\lambda \in \P_n: \lambda \in \P(I),\ \lambda \not \in \P(J)\}.$$
We note that $\Mon(I/J)$ is a $\Bbbk$-basis of $I/J$.

Let $\lambda=(\lambda_1,\dots,\lambda_n)$ be a partition and
$p=p(\lambda)$.  The following $S$-module $N^\lambda$ plays an
important role in our results:
$$N^\lambda=( \sigma(x^\lambda):\sigma \in \sym_n)/(\sigma(x_ix^\lambda): 1 \leq i\leq p, \sigma \in \sym_n).$$
We start by discussing some basic properties of the module
$N^\lambda$.  For $A=\{i_1,\dots,i_k\} \subset[n]$ with
$i_1<\cdots<i_k$, we write $x_A=x_{i_1} \cdots x_{i_k}$,
$\overline A=[n] \setminus A$, $\sym_A$ for the set of permutations on
$A$, $S_A=\Bbbk[x_{i_1},\dots,x_{i_k}]$,
$\mideal_A=(x_{i_1},\dots,x_{i_k}) \subset S_A$ the maximal ideal of
$S_A$, and $J_{A,r} \subset S_A$ the ideal of $S_A$ generated by all
squarefree monomials of degree $r$ in $S_A$.  For
$\mathbf a=(a_1,\dots,a_k)$, we write as above
$x_A^{\mathbf a}=x_{i_1}^{a_1} \cdots x_{i_k}^{a_k}$.

We set out to describe $\Mon (N^\lambda)$ starting with a preliminary example.

\begin{example}
  Let $\lambda = (0,1,1,2,2,3,3)$, so $p = p(\lambda) = 3$ and $r = r(\lambda) = 2$. In this case, $N^\lambda = I/J$ where
  \begin{equation*}
    \begin{split}
      I &= ( \sigma (x_1^0 x_2^1 x_3^1 x_4^2 x_5^2 x_6^3 x_7^3) : \sigma \in \sym_n),\\
      J &= ( \sigma (x_1^1 x_2^1 x_3^1 x_4^2 x_5^2 x_6^3 x_7^3) : \sigma \in \sym_n)
      + ( \sigma (x_1^0 x_2^1 x_3^2 x_4^2 x_5^2 x_6^3 x_7^3) : \sigma \in \sym_n).
    \end{split}
  \end{equation*}
  The monomial $x_2 x_3 x_4^2 x_5^2 x_6^3 x_7^3$ is an example of a
  monomial in $I$ but not in $J$. We can represent it as
  $m(x_4 x_5 x_6 x_7)^2 u$ where $m=x_2 x_3$ and $u=x_6 x_7$. This
  splits the indices of the variables into two sets: $A = \{1,2,3\}$
  and its complement $\overline{A} = \{4,5,6,7\}$. Note that
  $m = x_A^{\lambda_{\leqslant p}}$ is in the polynomial ring
  $S_A = \Bbbk [x_1,x_2,x_3]$, while $u$ is one of the minimal
  generators of $J_{\overline A, r}$ in
  $S_{\overline A} = \Bbbk [x_4,x_5,x_6,x_7]$. Also, the middle term
  $(x_{\overline A})^2 = (x_4 x_5 x_6 x_7)^2$ has exponent
  $\lambda_7 - 1$. Now notice that we can replace $m$ with any
  permutation $\sigma (x_A^{\lambda_{\leqslant p}})$ where
  $\sigma \in \sym_A$ and still obtain a monomial in $I$ and not in
  $J$; for example,
  \begin{equation*}
    x_1 x_2 x_4^2 x_5^2 x_6^3 x_7^3 = (x_1 x_2) (x_4 x_5 x_6 x_7)^2 u.
  \end{equation*}
  Similarly, we can replace
  $u$ by another generator (in fact, any monomial) of
  $J_{\overline A, r}$ and still obtain a monomial in
  $I$ and not in $J$; for example,
  \begin{equation*}
    \begin{split}
      &x_2 x_3 x_4^2 x_5^3 x_6^2 x_7^3 = m(x_4 x_5 x_6 x_7)^2 (x_5 x_7),\\
      &x_2 x_3 x_4^2 x_5^3 x_6^4 x_7^5 = m(x_4 x_5 x_6 x_7)^2 (x_5 x_6
      x_7^2).
    \end{split}
  \end{equation*}
  In addition, we could operate the same reasoning on any monomial
  obtained by permuting the variables in
  $x_2 x_3 x_4^2 x_5^2 x_6^3
  x_7^3$, leading to a similar split but with a different choice of
  index set $A$.  As we illustrate next, all elements of
  $\Mon
  (N^\lambda)$ can be obtained by combining these observations.
\end{example}

\begin{lemma}
  \label{2.1}
  Let $\lambda=(\lambda_1,\dots,\lambda_n)$ be a partition,
  $p=p(\lambda)$ and $r=r(\lambda)$.  Then
  \begin{equation}
    \label{2-1-1}
    \Mon(N^\lambda)= \biguplus_{A \subset [n],\ |A|=p} \left( \biguplus_{m \in \sym_A \cdot x_A^{\lambda_{\leq p}}} \{ m (x_{\overline A})^{\lambda_n-1} u : u \in \Mon(J_{\overline A,r})\} \right),
  \end{equation}
  and
  \begin{equation}
    \label{2-1-2}
    \P(N^\lambda)=\{(\lambda_1,\dots,\lambda_p,\mu_{p+1},\dots,\mu_n)\in \P_n: \mu_{p+1} \geq \lambda_n-1,\ \mu_{n-r+1} \geq \lambda_n\}.
  \end{equation}
\end{lemma}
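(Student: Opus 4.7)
My plan is to establish~\eqref{2-1-2} first by analyzing membership in $I$ and in $J$ through partition divisibility, and then to deduce~\eqref{2-1-1} from it via a unique-factorization argument.

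For~\eqref{2-1-2}, I will reduce membership in both ideals to partition divisibility using Lemma~\ref{1.1}: $w \in I$ iff $\lambda_k \leq \pa(w)_k$ for all $k$, and $w \in J$ iff $\pa(x_i x^\lambda)_k \leq \pa(w)_k$ for all $k$ and some $1 \leq i \leq p$. The key technical input is that for every such $i$, the partition $\pa(x_i x^\lambda)$ agrees with $\lambda$ in the coordinates $p+1, \dots, n$ and differs from $\lambda$ in exactly one coordinate $j \leq p$, at which $\pa(x_i x^\lambda)_j = \lambda_j + 1$. This follows from the chain of inequalities $\lambda_i + 1 \leq \lambda_n - 1 \leq \lambda_{p+1}$, which forces the incremented value $\lambda_i + 1$ to remain among the first $p$ entries upon re-sorting. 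Consequently, $w \in J$ becomes equivalent to $w \in I$ together with $\pa(w)_k > \lambda_k$ for some $k \leq p$; for the reverse implication one chooses $k$ minimal with $\pa(w)_k > \lambda_k$ and uses that $\pa(w)$ is non-decreasing. Hence $w \in \Mon(I/J)$ iff $w \in I$ and $\pa(w)_{\leq p} = \lambda_{\leq p}$, and unpacking the tail conditions from $w \in I$ using the definitions of $p$ and $r$ produces the description~\eqref{2-1-2}.

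To deduce~\eqref{2-1-1}, I will exhibit a unique factorization for every $w \in \Mon(I/J)$. Given such $w$, let $A(w) = \{j \in [n] : \deg_{x_j}(w) < \lambda_n - 1\}$; by~\eqref{2-1-2}, $|A(w)| = p$ and the multiset of exponents of $w$ on the variables indexed by $A(w)$ is exactly $\{\lambda_1, \dots, \lambda_p\}$. Splitting $w = m \cdot w'$ with $m$ collecting the variables in $A(w)$ and $w'$ those in $\overline{A(w)}$, one obtains $m \in \sym_{A(w)} \cdot x_{A(w)}^{\lambda_{\leq p}}$, while each variable exponent in $w'$ is at least $\lambda_n - 1$. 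Writing $w' = (x_{\overline{A(w)}})^{\lambda_n - 1} u$ for a unique monomial $u$ in $S_{\overline{A(w)}}$, the tail condition $\pa(w)_{n-r+1} \geq \lambda_n$ from~\eqref{2-1-2} guarantees that $u$ has positive exponent on at least $r$ variables, placing it in $\Mon(J_{\overline{A(w)}, r})$. Conversely, any monomial of the form $m (x_{\overline A})^{\lambda_n - 1} u$ appearing on the right-hand side of~\eqref{2-1-1} is readily checked to have partition type satisfying~\eqref{2-1-2}. The disjointness of the unions follows from the uniqueness of $A(w)$, $m$, and $u$ in this decomposition.

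The main obstacle is the precise analysis of $\pa(x_i x^\lambda)$ that underpins~\eqref{2-1-2}: a short case split on whether $\lambda_i$ equals $\lambda_{i+1}$ or not shows that the sorted result differs from $\lambda$ in exactly one position within the first $p$ coordinates, with that position carrying the increment. Once this combinatorial computation is secured, the remainder of the argument reduces to careful bookkeeping with exponents on $A(w)$ versus its complement.
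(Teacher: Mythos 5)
Your proof is correct and follows essentially the same route as the paper's: both arguments rest on Lemma \ref{1.1}, on the observation that the partition generators of the subtracted ideal are exactly the $\lambda + e_j$ with $j \leq p$ (so that membership outside it forces $\pa(w)_{\leq p} = \lambda_{\leq p}$), and on identifying $A$ as the set of indices whose exponent is below $\lambda_n - 1$. The only difference is organizational: you establish \eqref{2-1-2} first and deduce \eqref{2-1-1} from it, whereas the paper proves \eqref{2-1-1} directly and notes that \eqref{2-1-2} follows.
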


\begin{proof}
  Equation \eqref{2-1-2} easily follows from \eqref{2-1-1}.  Hence, we
  only need to show \eqref{2-1-1}.  We first prove the inclusion
  ``$\subset$".  Let $u \in \Mon(N^\lambda)$. Then there is
  $\sigma \in \sym_n$ such that
  $\sigma(x^\lambda)=x_{\sigma(1)}^{\lambda_1} \cdots
  x_{\sigma(n)}^{\lambda_n}$ divides $u$.  We write
  $u=x_{\sigma(1)}^{a_1} \cdots x_{\sigma(n)}^{a_n}$.  Since $u$ is
  not contained in the ideal
  $(\sigma(x_ix^\lambda):1 \leq i \leq p, \sigma \in \sym_n)$, we have
  $a_1=\lambda_1,\dots,a_p=\lambda_p$.  Also, since
  $\sigma(x^\lambda)$ divides $u$, $a_k \geq \lambda_n-1$ for
  $p<k \leq n-r$, and $a_k \geq \lambda_n$ for $k \geq n-r+1$.  These
  inequalities imply that, by setting
  $A=\{\sigma(1),\dots,\sigma(p)\}$,
  $$u=\sigma(x_1^{\lambda_1} \cdots x_p^{\lambda_p}) (x_{\overline A})^{\lambda_{n}-1}(x_{\sigma(n-r+1)}\cdots x_{\sigma(n)})w$$
  for some $w \in \Mon(S_{\overline A})$, which shows that $u$ is
  contained in the right hand side of \eqref{2-1-1}.

  Next, we prove the inclusion ``$\supset$'' in \eqref{2-1-1}.  Let
  $u=m (x_{\overline A})^{\lambda_n-1} w$ with
  $m \in \sym_A \cdot x_A^{\lambda_{\leq p}}$ and
  $w \in \Mon(J_{\overline A,r})$.  By taking a permutation
  $\tau \in \sym_n$ appropriately,
  \begin{equation}
    \label{2-1}
    \tau(u)=x_1^{\lambda_1} \cdots x_p^{\lambda_p} x_{p+1}^{\lambda_n-1} \cdots x_n^{\lambda_n-1} \alpha
  \end{equation}
  with $\alpha \in \Mon(J_{\overline{[p]},r})$.  Moreover, we may
  choose $\tau$ so that $\alpha$ is divisible by
  $x_{n-r+1} \cdots x_n$. Then $x^\lambda$ divides $\tau(u)$ and
  $u \in (\sigma(x^\lambda):\sigma \in \sym_n)$.  We claim that $u$ is
  not contained in the ideal
  $J=(\sigma(x_i x^\lambda):1 \leq i \leq p, \sigma \in \sym_n)$.  We
  already see in \eqref{2-1} that if $\mu =\type(u)$ then $\mu$ is of
  the form
  $$\mu=(\lambda_1,\dots,\lambda_p,\mu_{p+1},\dots,\mu_n).$$
  Observe that
  $$\Lambda(J)=\{\lambda + e_i: 1 \leq i \leq p, \lambda+e_i \in \P_n\},$$
  where $e_i$ is the $i$-th standard basis vector of $\mathbb{Z}^n$.
  Since no element in $\Lambda(J)$ divides $\mu$, by Lemma \ref{1.1}
  the monomial $u \in \sym_n \cdot x^\mu$ is not contained in $J$.

  We finally show that the right-hand side of \eqref{2-1-1} is indeed
  a disjoint union.  To show this, it is enough to prove that for each
  $u=x_1^{a_1} \cdots x_n^{a_n}$ that is contained in the right-hand
  side of \eqref{2-1-1} there is a unique subset $A \subset [n]$ with
  $|A|=p$ and $m \in \sym_A \cdot x_A^{\lambda_{\leq p}}$ such that
  $u=m \alpha$ with $\alpha \in S_{\overline A}$.  Indeed, since
  $|\{ k: a_k<\lambda_n-1\}|=p$ by the shape of monomials in the
  right-hand side of \eqref{2-1-1}, such a set $A$ must be equal to
  the set $\{k : a_k <\lambda_n-1\}$, and a monomial $m$ must be
  $\prod_{i \in A} x_i^{a_i}$.
\end{proof}

Next, we decompose $N^\lambda$ into smaller modules which have a
simpler structure but are not fixed by the action of $\sym_n$.  Let
$\lambda \in \P_n$ be a partition, $p=p(\lambda)$ and $r=r(\lambda)$.
For $A \subset [n]$ with $|A|=p$ and
$m \in \sym_A \cdot x_A^{\lambda_{\leq p}}$, we define
$$N_{A,m}^\lambda
= \left( m \tau \left( x_{\overline
      A}^{(\lambda_{p+1},\dots,\lambda_n)} \right) :\tau \in
  \sym_{\overline A} \right)/ \left(x_i m \tau \left( x_{\overline
    A}^{(\lambda_{p+1},\dots,\lambda_n)} \right) :\tau \in
\sym_{\overline A},\ i \in A \right).$$ Recall that, for
$X \subset [n]$, $J_{X,r}$ is the monomial ideal of $S_X$ generated by
all squarefree monomials of degree $r$ in $S_X$ and
$\mideal_X=(x_i:i\in X)$ is the maximal ideal of $S_X$.  Since
$$(\lambda_{p+1},\dots,\lambda_n)=(\lambda_n-1,\dots,\lambda_n-1,\lambda_n,\dots,\lambda_n)$$
where $\lambda_n$ appears $r$ times, $N_{A,m}^\lambda$ is generated by
monomials
$\{m (x_{\overline A})^{\lambda_n-1} x_T:T \subset \overline A,
|T|=r\}$ and every monomial in $N^\lambda_{A,m}$ is divisible by
$m (x_{\overline A})^{\lambda_n-1}$.  Thus, by the map
$f \to f/(m (x_{\overline A})^{\lambda_n-1})$, we have an isomorphism
\begin{align}
  \label{2-2}
  \nonumber
  N_{A,m}^\lambda
  &\cong
    (x_T: T \subset\overline A, |T|=r)/(x_i x_T:T \subset \overline A,|T|=r, i \in A)\\
  & \cong
    \big(S/(x_i: i\in A)\big) \otimes_S (J_{\overline A,r} S)\\
  \nonumber
  &
    \cong
    S_A/\mideal_A \otimes_{\Bbbk}
    J_{\overline A,r}
\end{align}
where we consider that the last module is a module over
$S=S_A \otimes_{\Bbbk} S_{\overline A}$.

\begin{lemma}
  \label{2.2}
  Let $\lambda \in \P_n$ be a partition, $p=p(\lambda)$,
  $r=r(\lambda)$, $A \subset [n]$ with $|A|=p$, and
  $m \in \sym_A\cdot x^{\lambda_{\leqslant p}}_A$. Then
  \begin{itemize}
  \item[(i)]
    $\Mon(N_{A,m}^\lambda)=\{ m (x_{\overline A})^{\lambda_n-1} u: u
    \in \Mon(J_{\overline A,r})\}$.
  \item[(ii)] $N_{A,m}^\lambda$ is an $S$-submodule of $N^\lambda$.
  \item[(iii)]
    $\displaystyle N^\lambda= \bigoplus_{A \subset
      [n],|A|=p}\bigoplus_{m \in \sym_A \cdot x_A^{\lambda_{\leq p}}}
    N_{A,m}^\lambda$ (as $S$-modules).
  \end{itemize}
\end{lemma}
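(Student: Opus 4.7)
The plan is to establish the three parts in order, relying heavily on the monomial description in Lemma \ref{2.1}.

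For part (i), I would argue directly from the definition of $N^\lambda_{A,m}$ as a quotient of monomial modules. The numerator ideal is generated by the monomials $m\tau(x_{\overline A}^{(\lambda_{p+1},\dots,\lambda_n)})$ as $\tau$ ranges over $\sym_{\overline A}$. Since $(\lambda_{p+1},\dots,\lambda_n) = (\lambda_n-1,\dots,\lambda_n-1,\lambda_n,\dots,\lambda_n)$ with $r$ copies of $\lambda_n$, each such generator has the form $m\cdot(x_{\overline A})^{\lambda_n-1}\cdot x_T$ where $T\subset\overline A$ has $|T|=r$. A monomial lies in the numerator iff its $A$-part is divisible by $m$ and its $\overline A$-part lies in $(x_{\overline A})^{\lambda_n-1}\cdot J_{\overline A,r}$. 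It fails to lie in the denominator iff its $A$-part equals $m$ exactly. Combining these conditions yields the description in (i).

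For part (ii), the natural inclusion $\sym_A\times\sym_{\overline A}\hookrightarrow\sym_n$ shows that the generators of the numerator of $N^\lambda_{A,m}$ sit inside the numerator of $N^\lambda$, and similarly for the denominators (since each $\tau\in\sym_{\overline A}$ fixes $A$ pointwise, the relation $x_i\cdot m\tau(x_{\overline A}^{(\lambda_{p+1},\dots,\lambda_n)})$ for $i\in A$ is of the form $\sigma(x_ix^\lambda)$ after a suitable rearrangement). Hence there is a well-defined $S$-module map $N^\lambda_{A,m}\to N^\lambda$. To see this map is injective, I would compare $\Bbbk$-bases: by (i) the monomials of $N^\lambda_{A,m}$ form a subset of the right-hand side of \eqref{2-1-1}, which by Lemma \ref{2.1} is a $\Bbbk$-basis of $N^\lambda$. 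So the map sends basis to basis injectively and identifies $N^\lambda_{A,m}$ with a $\Bbbk$-subspace of $N^\lambda$; since it is also an $S$-module map, this subspace is a submodule.

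For part (iii), I would invoke Lemma \ref{2.1} directly: equation \eqref{2-1-1} states that $\Mon(N^\lambda)$ is the disjoint union of the sets $\{m(x_{\overline A})^{\lambda_n-1}u:u\in\Mon(J_{\overline A,r})\}$ as $A$ ranges over $p$-subsets of $[n]$ and $m$ over $\sym_A\cdot x_A^{\lambda_{\leq p}}$. By (i), these are precisely the monomial bases $\Mon(N^\lambda_{A,m})$. Thus the sum of the submodules $N^\lambda_{A,m}$ inside $N^\lambda$ spans all of $N^\lambda$ (as a $\Bbbk$-vector space, hence as an $S$-module), and the disjointness of the monomial bases guarantees the sum is direct.

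The most delicate step will be (ii): one must check carefully that the relations in the denominator of $N^\lambda_{A,m}$ are genuinely among the relations in the denominator of $N^\lambda$ (so the map is well-defined), and then separately that no nonzero element of $N^\lambda_{A,m}$ maps to zero. The monomial-basis argument handles the latter cleanly, but writing out the former requires being explicit about how permutations in $\sym_{\overline A}$ extended by the identity on $A$ realize the needed $\sym_n$-orbits. Parts (i) and (iii) should then fall out with minimal additional work.
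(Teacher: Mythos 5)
Your proof is correct and follows essentially the same route as the paper: part (i) is read off from the explicit description \eqref{2-2} of $N_{A,m}^\lambda$, part (iii) is a direct consequence of the disjoint-union decomposition of $\Mon(N^\lambda)$ in Lemma \ref{2.1}, and part (ii) hinges on comparing monomial bases via Lemma \ref{2.1}. The only stylistic difference is in (ii): the paper phrases the key step as computing $\operatorname{ann}_{N^\lambda}\bigl(m(x_{\overline A})^{\lambda_n-1}x_{j_1}\cdots x_{j_r}\bigr)=(x_i : i\in A)$, whereas you construct the induced map explicitly and verify injectivity on monomial bases, but these encode the same observation.
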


\begin{proof}
  Statement (i) follows from \eqref{2-2}.  To prove (ii), it is enough
  to show that for any $\{j_1,\dots,j_r\} \subset \overline A$, one
  has
  $$\operatorname{ann}_{N^\lambda}(m (x_{\overline A})^{\lambda_n-1} x_{j_1} \cdots x_{j_r})=(x_i:i \in A),$$
  where $\operatorname{ann}_M(h)=\{f \in S: hf=0\}$ for an $S$-module $M$
  and $h \in M$.  The inclusion ``$\supset$" is clear from the
  definition of $N^\lambda_{A,m}$.  To see the inclusion ``$\subset$",
  we must prove that for any monomial $u$ in $S_{\overline A}$,
  $m (x_{\overline A})^{\lambda_n-1} x_{j_1} \cdots x_{j_r} u$ is
  non-zero in $N^\lambda$, and this follows from Lemma \ref{2.1}.

  Statement (iii) follows from (i) and Lemma \ref{2.1}.
\end{proof}

\begin{corollary}
  \label{2.3}
  The $S$-modules $N_{A,m}^\lambda$ and $N^\lambda$ have linear
  resolutions.
\end{corollary}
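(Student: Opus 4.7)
The plan is to prove linearity first for the summands $N^\lambda_{A,m}$ using the isomorphism \eqref{2-2}, and then transfer the conclusion to $N^\lambda$ via the direct sum decomposition of Lemma \ref{2.2}(iii). Up to a shift of internal degree by $|\lambda_{\leq p}|+(n-p)(\lambda_n-1)$, the isomorphism \eqref{2-2} identifies $N^\lambda_{A,m}$ with the $S$-module $(S_A/\mideal_A) \otimes_\Bbbk J_{\overline A, r}$, where $S = S_A \otimes_\Bbbk S_{\overline A}$. The Koszul complex on the variables in $A$ provides a linear graded free resolution of $S_A/\mideal_A \cong \Bbbk$ over $S_A$, while the ideal $J_{\overline A, r}$ of squarefree monomials of degree $r$ in $S_{\overline A}$ admits a linear graded free resolution over $S_{\overline A}$; see \cite[Theorem 4.11]{Ga}, or note that $J_{\overline A, r}$ is a stable ideal with generators in a single degree so that the Eliahou--Kervaire resolution is linear.

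Next I would tensor these two resolutions over $\Bbbk$. Since $\Bbbk$ is a field, a standard K\"unneth-type argument shows that the resulting complex is exact and consists of free $S$-modules, hence is a graded free resolution of $(S_A/\mideal_A) \otimes_\Bbbk J_{\overline A, r}$. Because both constituent resolutions are linear and the minimal generators of $N^\lambda_{A,m}$ are all concentrated in the single internal degree $|\lambda| = |\lambda_{\leq p}|+(n-p)(\lambda_n-1)+r$, the tensor product resolution is linear and minimal. This settles the claim for each $N^\lambda_{A,m}$.

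To conclude for $N^\lambda$, observe that Lemma \ref{2.1} implies all minimal generators of $N^\lambda$ lie in the single degree $|\lambda|$, and hence so do the generators of every summand $N^\lambda_{A,m}$ appearing in Lemma \ref{2.2}(iii). Taking the direct sum of the minimal linear resolutions of the individual summands then produces a minimal linear free resolution of $N^\lambda$. The main delicate point in this plan is the K\"unneth step, i.e., checking that the tensor product over $\Bbbk$ of the two linear resolutions really is a resolution over $S$; this rests on $\Bbbk$-flatness of both factors and is otherwise routine, after which the linearity and minimality of the total complex follow from the corresponding properties of each factor.
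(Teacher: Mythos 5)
Your argument is correct and follows essentially the same route as the paper: identify $N^\lambda_{A,m}$ with $(S_A/\mideal_A)\otimes_\Bbbk J_{\overline A,r}$ via \eqref{2-2}, tensor their linear minimal resolutions over $\Bbbk$ (the paper leaves the K\"unneth step implicit where you spell it out), and then pass to $N^\lambda$ by the direct sum decomposition of Lemma \ref{2.2}(iii).
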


\begin{proof}
  By the isomorphism in \eqref{2-2}, the tensor product of a minimal graded
  free resolution of $J_{\overline A,r}$ and one of $S_A/\mideal_A$ is
  isomorphic to a minimal graded free resolution of $N_{A,m}^\lambda$.  Since
  $J_{\overline A,r}$ and $S_A/\mideal_A$ have linear resolutions, the
  module $N_{A,m}^\lambda$ has a linear resolution.  Then $N^\lambda$
  also has a linear resolution by Lemma \ref{2.2}(iii).
\end{proof}

We now prove the main result of this section.

\begin{theorem}
  \label{2.4}
  If $I \subset S$ is a symmetric shifted ideal, then as
  $\Bbbk[\sym_n]$-modules we have
$$\Tor_i(I,\Bbbk)_{i+d} \cong \bigoplus_{\lambda \in \Lambda(I),\ |\lambda|=d} \Tor_i(N^\lambda,\Bbbk).
$$
\end{theorem}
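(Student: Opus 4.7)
The plan is to filter $I$ by $\sym_n$-stable subideals whose successive quotients are the modules $N^\lambda$, and then extract the decomposition from the resulting long exact sequences in $\Tor$. Enumerate $\Lambda(I) = \{\lambda^{(1)}, \dots, \lambda^{(s)}\}$ in strictly increasing $<_\lex$ order, set $I^{(0)} = 0$, and for each $1 \leq k \leq s$ let
\[
I^{(k)} = \bigl(\sigma(x^{\lambda^{(j)}}) : j \leq k,\ \sigma \in \sym_n\bigr),
\]
which is an $\sym_n$-fixed ideal with $I^{(s)} = I$.

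The central structural claim is that, for each $k$, there is an $S$-linear, $\sym_n$-equivariant isomorphism $N^{\lambda^{(k)}} \cong I^{(k)}/I^{(k-1)}$ sending $\sigma(x^{\lambda^{(k)}})$ to its class. For well-definedness, one checks that $\sigma(x_i x^{\lambda^{(k)}})$ belongs to $I^{(k-1)}$ for $i \leq p(\lambda^{(k)})$ and $\sigma \in \sym_n$: since $\lambda^{(k)}_i < \lambda^{(k)}_n - 1$, the shifted property gives $w = x^{\lambda^{(k)}}(x_i/x_n) \in I$, and the argument from the existence part of Lemma \ref{1.3} shows $\pa(w) <_\lex \lambda^{(k)}$, so $w$ lies in some $\sym_n \cdot x^{\lambda^{(j)}}$ with $j < k$ and $x_i x^{\lambda^{(k)}} = x_n w \in I^{(k-1)}$. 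Bijectivity is then verified on monomial bases: a monomial $u$ of $I^{(k)}$ escapes $I^{(k-1)}$ exactly when the $\rho \in \Lambda(I)$ associated to $\pa(u)$ by Lemma \ref{1.3} equals $\lambda^{(k)}$; by condition (b) of that lemma, this pins the first $p(\lambda^{(k)})$ entries of $\pa(u)$ to those of $\lambda^{(k)}$, which is precisely the description of $\Mon(N^{\lambda^{(k)}})$ furnished by \eqref{2-1-2}.

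From the short exact sequences $0 \to I^{(k-1)} \to I^{(k)} \to N^{\lambda^{(k)}} \to 0$ of $\Bbbk[\sym_n]$-modules, the long exact sequences in $\Tor(-,\Bbbk)$ restricted to internal degree $i+d$ become very restricted: by Corollary \ref{2.3}, $N^{\lambda^{(k)}}$ has a linear resolution generated in degree $|\lambda^{(k)}|$, so $\Tor_j(N^{\lambda^{(k)}},\Bbbk)_{i+d}$ vanishes unless $j + |\lambda^{(k)}| = i + d$. When $|\lambda^{(k)}| \notin \{d-1, d\}$, both neighbors of $\Tor_i(I^{(k)})_{i+d}$ in the long exact sequence vanish, yielding an equivariant isomorphism $\Tor_i(I^{(k-1)})_{i+d} \cong \Tor_i(I^{(k)})_{i+d}$. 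When $|\lambda^{(k)}| = d$ or $d-1$, exactness reduces to a four-term sequence containing a single connecting homomorphism $\delta$.

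The vanishing of $\delta$ is the main obstacle. I would dispatch it by a dimension count: Corollary \ref{cor:betti_shifted} yields
\[
\dim_\Bbbk \Tor_i(I,\Bbbk)_{i+d} = \sum_{\substack{u \in G(I) \\ \deg u = d}} \binom{|C(u)|}{i},
\]
which, on regrouping by $\sym_n$-orbits and matching each $\binom{|C(u)|}{i}$ against the K\"unneth contribution of the corresponding summand in the decomposition $N^\lambda = \bigoplus_{A,m} N^\lambda_{A,m}$ from Lemma \ref{2.2}(iii) under the identification \eqref{2-2}, agrees precisely with $\sum_{|\lambda| = d} \dim_\Bbbk \Tor_i(N^\lambda,\Bbbk)_{i+d}$. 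The partial exactness extracted above then forces each $\delta = 0$, upgrading the long exact sequences to short ones; induction on $k$ assembles these into the desired decomposition, and the $\sym_n$-equivariance of every map involved promotes the underlying vector-space statement to the claimed $\Bbbk[\sym_n]$-module isomorphism.
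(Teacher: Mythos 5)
Your proof follows the same filtration strategy as the paper: the $\sym_n$-stable ideals $I^{(k)}$ are the paper's $I_{\leq k}$, the identification $I^{(k)}/I^{(k-1)} \cong N^{\lambda^{(k)}}$ via Lemma~\ref{1.3} and Equation~\eqref{2-1-2} is exactly Lemma~\ref{2.3.1}, and the long-exact-sequence machinery is the same. Where you diverge is in showing the connecting maps vanish, and here the two routes are genuinely different. The paper exploits the $<_\lex$-ordering of $\Lambda(I)$: because $|\lambda^{(1)}| \leq \cdots \leq |\lambda^{(t)}|$ and each $N^{\lambda^{(l)}}$ has a linear resolution (Corollary~\ref{2.3}), the inductive hypothesis yields $\operatorname{reg}(I_{\leq k-1}) \leq |\lambda^{(k)}|$, which makes the relevant source or target of each connecting map zero. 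That argument is self-contained and does not touch Section~\ref{sec:shifted-ideals-have}. You instead invoke Corollary~\ref{cor:betti_shifted} (hence Theorem~\ref{thm:1}, linear quotients) and appeal to a dimension count. This works in principle, but it (i) introduces a logical dependency on the linear-quotients theorem that the paper's proof avoids, and (ii) leaves the crucial matching $\sum_{u \in \sym_n\cdot x^\lambda} \binom{|C(u)|}{i} = \beta_i(N^\lambda)$ asserted but unproved. That identity is true — it follows from noting $|C(u)| = p(\lambda) + |C_{\overline A}(x_T)|$ where $u$ corresponds to $(A,m,T)$ and $C_{\overline A}(x_T)$ is the colon set of $x_T$ inside $J_{\overline A, r}$, then applying Vandermonde's convolution and the Herzog--Takayama count for $J_{\overline A,r}$ — but your phrasing ``matching each $\binom{|C(u)|}{i}$ against the K\"unneth contribution of the corresponding summand'' is imprecise: the matching is at the level of $(A,m)$-orbits versus the summands $N^\lambda_{A,m}$, not monomial-by-monomial. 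If you want to retain your route, you should spell that identity out; otherwise, notice that the paper's regularity bound, powered by the $<_\lex$-ordering of the $\lambda^{(k)}$, gets you there much more cheaply.
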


To prove the above theorem, we first show the following statement.

\begin{lemma}
  \label{2.3.1}
  Let $I$ be a symmetric shifted ideal and
  $\Lambda(I)=\{\lambda^{(1)},\dots,\lambda^{(t)}\}$ with
  $\lambda^{(1)} <_\lex \cdots <_\lex \lambda^{(t)}$.  Let
  $I_{\leq k} \subseteq I$ be the $\sym_n$-fixed monomial ideal with
  $\Lambda(I_{\leq k})=\{\lambda^{(1)},\dots,\lambda^{(k)}\}$ for
  $k=1,2,\dots,t$.  Then, for $k=1,2,\dots,t$,
$$I_{\leq k}/I_{\leq k-1} \cong N^{\lambda^{(k)}}$$
as $S$-modules, where $I_{\leq 0}=(0)$.
\end{lemma}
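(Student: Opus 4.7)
The plan is to realize the claimed isomorphism as the natural $S$-module map induced by including the orbit ideal of $x^{\lambda^{(k)}}$ into $I_{\leq k}$ followed by the projection onto $I_{\leq k}/I_{\leq k-1}$. Set $\lambda=\lambda^{(k)}$ and $p=p(\lambda)$, and write $O=(\sigma(x^\lambda):\sigma\in\sym_n)$ together with $Q=(\sigma(x_ix^\lambda):1\leq i\leq p,\,\sigma\in\sym_n)$, so that by definition $N^\lambda=O/Q$. Since $I_{\leq k}=I_{\leq k-1}+O$, the composition $O\hookrightarrow I_{\leq k}\twoheadrightarrow I_{\leq k}/I_{\leq k-1}$ is a surjective $S$-module map $\phi$, and the task reduces to showing that $\ker\phi=O\cap I_{\leq k-1}$ equals $Q$.

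For the inclusion $Q\subseteq I_{\leq k-1}$, I would prove $x_ix^\lambda\in I_{\leq k-1}$ for $1\leq i\leq p$. Since $\lambda_i<\lambda_n-1<\lambda_n$, $I$ being shifted yields $x^\lambda(x_i/x_n)\in I$, whose partition type $\nu$ satisfies $|\nu|=|\lambda|$ and $\nu\neq\lambda$. Comparing, at each threshold $t$, the number of entries $\leq t$ of $\lambda$ and of $\nu$ (the only net changes are a decrease by one at $t=\lambda_i$ and an increase by one at $t=\lambda_n-1$) shows that the leftmost index at which $\nu$ and $\lambda$ differ has $\nu$ strictly larger, so $\nu<_\lex\lambda$. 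Any $\lambda^{(j)}\in\Lambda(I)$ dividing $\nu\in\P(I)$ therefore satisfies $j<k$. The multiset of $\pa(x_ix^\lambda)$ is obtained from that of $\nu$ by replacing a single entry $\lambda_n-1$ with $\lambda_n$, so $\nu\mid\pa(x_ix^\lambda)$ and hence $\lambda^{(j)}\mid\pa(x_ix^\lambda)$; Lemma \ref{1.1} then produces a permutation of $x^{\lambda^{(j)}}$ dividing $x_ix^\lambda$, placing the latter in $I_{\leq k-1}$.

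For the reverse inclusion $O\cap I_{\leq k-1}\subseteq Q$, since all three ideals are monomial it suffices to treat a single monomial $u$ satisfying $\lambda\mid\pa(u)$ and $\lambda^{(j)}\mid\pa(u)$ for some $j<k$, and show $u\in Q$. Minimality of elements of $\Lambda(I)$ forces $\pa(u)\neq\lambda$, so let $q$ be the smallest index with $\pa(u)_q>\lambda_q$. If $q\leq p$, then $x_qx^\lambda$ divides $x^{\pa(u)}$ as a monomial, so $\pa(x_qx^\lambda)\mid\pa(u)$ by Lemma \ref{1.1} and $u\in Q$. The case $q>p$ cannot occur: there $\pa(u)_{\leq p}=\lambda_{\leq p}$, so $\lambda$ itself satisfies conditions (a) and (b) of Lemma \ref{1.3} for $\mu=\pa(u)$, but the construction used in the proof of Lemma \ref{1.3} identifies the unique such partition as the $\lex$-minimum element of $\{\rho\in\Lambda(I):\rho\mid\pa(u)\}$, contradicting the fact that $\lambda^{(j)}<_\lex\lambda$ also divides $\pa(u)$. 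Combined with the evident surjectivity of $\phi$, this pins down $\ker\phi=Q$ and induces the desired isomorphism. The main obstacle is closing the $q>p$ case, where the crucial input is extracting from the proof of Lemma \ref{1.3} the $\lex$-minimum characterization of the unique divisor in $\Lambda(I)$ satisfying conditions (a) and (b).
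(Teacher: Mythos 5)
Your proposal is correct and takes essentially the same approach as the paper: realize $I_{\leq k}/I_{\leq k-1}$ as a quotient of the orbit ideal $O$ of $x^{\lambda^{(k)}}$ and identify the kernel with the denominator $Q$ of $N^{\lambda^{(k)}}$, with Lemma \ref{1.3} (and its proof's $\lex$-minimum characterization) doing the decisive work. The only cosmetic difference is that the paper packages both inclusions as an equality of partition sets, citing Lemma \ref{2.1} to read off $\P(N^{\lambda^{(k)}})$, whereas you verify the two inclusions $Q\subseteq O\cap I_{\leq k-1}$ and $O\cap I_{\leq k-1}\subseteq Q$ directly at the level of monomials.
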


\begin{proof}
  Observe that $I_{\leq k}$ is shifted and
  $$I_{\leq k}/I_{\leq k-1} =\left(\sigma(x^{\lambda^{(k)}}):\sigma \in \sym_n \right)/
  \left(( \sigma(x^{\lambda^{(k)}}):\sigma \in \sym_n ) \cap I_{\leq
      k-1}\right).$$ Since for monomial ideals $I,J,J'$ with
  $I \supset J$ and $I \supset J'$, we have $I/J=I/J'$ if and only if
  $\Mon(I/J)=\Mon(I/J')$, it is enough to prove
  $\P(I_{\leq k}/I_{\leq k-1})=\P(N^{\lambda^{(k)}})$.  Let
  $\lambda^{(k)}=(\lambda_1,\dots,\lambda_n)$, $p=p(\lambda^{(k)})$
  and $r=r(\lambda^{(k)})$.  Then we have
  \begin{align*}
    \P(I_{\leq k}/I_{\leq k-1})
    &= \P(I_{\leq k}) \setminus \P(I_{\leq k-1})\\
    &=
      \{\mu=(\mu_1,\dots,\mu_n) \in \P_n: \lambda^{(k)} \mbox{ divides } \mu, \lambda^{(k)}_{\leq p}=\mu_{\leq p}\}\\
    &=\{(\lambda_1,\dots,\lambda_p,\mu_{p+1},\dots,\mu_n) \in \P_n: \mu_{p+1} \geq \lambda_n-1,\ \mu_{n-r+1} \geq \lambda_n\}\\
    &=\P(N^{\lambda^{(k)}}),
  \end{align*}
  where we use Lemma \ref{1.3} for the third equality and Lemma
  \ref{2.1} for the last one.
\end{proof}

\begin{proof}[Proof of Theorem \ref{2.4}.]
  Let $\Lambda(I)=\{\lambda^{(1)},\dots,\lambda^{(t)}\}$ with
  $\lambda^{(1)} <_\lex \cdots <_\lex \lambda^{(t)}$ and let
  $I_{\leq k}$ be as in Lemma \ref{2.3.1}.  Then we have the short
  exact sequence
  \begin{equation}
    \label{2-3}
    0 \longrightarrow
    I_{\leq k-1} \longrightarrow I_{\leq k}
    \longrightarrow I_{\leq k}/I_{\leq k-1} \cong N^{\lambda^{(k)}}\longrightarrow 0.
  \end{equation}
  We prove
  $\Tor_i(I_{\leq k},\Bbbk) \cong \bigoplus_{l=1}^{k}
  \Tor_i(N^{\lambda^{(l)}},\Bbbk)$ using induction on $k$. Note that
  by Corollary \ref{2.3} this implies the desired statement.

  By the definition of the shifted property, the partition
  $\lambda^{(1)}$ must be a partition of the form
  $\lambda^{(1)}=(a,a,\dots,a,a+1,\dots,a+1)$.  Thus,
  $p(\lambda^{(1)})=0$ and
  $$I_{\leq 1}=( \sigma(x^{\lambda^{(1)}}):\sigma \in \sym_n)=N^{\lambda^{(1)}}.$$
  Hence, the assertion holds when $k=1$.

  Suppose $k>1$.  Since
  $|\lambda^{(1)}| \leq \cdots \leq |\lambda^{(t)}|$, using the
  inductive hypothesis we get
  $$\operatorname{reg}(I_{\leq k-1})= \max \{|\lambda^{(1)}|,\dots,|\lambda^{(k-1)}|\} \leq |\lambda^{(k)}|,$$
  because $N^{\lambda^{(l)}}$ is generated in degree $|\lambda^{(l)}|$
  and has a linear resolution by Corollary \ref{2.3}.  The short exact
  sequence in \eqref{2-3} induces the exact sequence
  \begin{align}
    \label{2-5-1}
    & \Tor_{i+1}(N^{\lambda^{(k)}},\Bbbk)_{i+1+(j-1)} \longrightarrow \Tor_i(I_{\leq k-1},\Bbbk)_{i+j}
      \longrightarrow\Tor_i(I_{\leq k},\Bbbk)_{i+j}\\
    \nonumber
    &\longrightarrow \Tor_i(N^{\lambda^{(k)}},\Bbbk)_{i+j} \longrightarrow \Tor_{i-1}(I_{\leq k-1},\Bbbk)_{i-1+(j+1)}.
  \end{align}
  Let $d=|\lambda^{(k)}|$.  By Corollary \ref{2.3},
  \begin{equation}
    \label{2-4}
    \Tor_i(N^{\lambda^{(k)}},\Bbbk)_{i+j}=0 \ \ \ \mbox{ for }j \ne d.
  \end{equation}
  Also, since $\operatorname{reg}(I_{\leq k-1})\leq d$,
  \begin{equation}
    \label{2-5}
    \Tor_{i-1}(I_{\leq k-1},\Bbbk)_{i-1+(j+1)}=0 \ \ \ \mbox{ for }j \geq d.
  \end{equation}
  Then \eqref{2-5-1}, \eqref{2-4} and \eqref{2-5} imply
  $$\Tor_i(I_{\leq k},\Bbbk)_{i+d} \cong \Tor_i(I_{\leq k-1},\Bbbk)_{i+d} \bigoplus \Tor_i(N^{\lambda^{(k)}},\Bbbk)_{i+d}$$
  and
  $$\Tor_i(I_{\leq k},\Bbbk)_{i+j} \cong \Tor_i(I_{\leq k-1},\Bbbk)_{i+j}\ \ \ \mbox{ for }j \ne d.$$
  These isomorphisms prove the desired statement.
\end{proof}

Using Theorem \ref{2.4}, it is possible to give a closed formula of
graded Betti numbers of a symmetric shifted ideal $I$ in terms of its
partition generator $\Lambda(I)$.  Let $\lambda \in \P_n$ with
$|\lambda|=d$, $p=p(\lambda)$ and $r=r(\lambda)$.  Then by \eqref{2-2}
we have
\begin{align*}
  \beta_{i,i+d}(N_{A,m}^\lambda)
  &= \beta_i(J_{\overline A,r} \otimes_{\Bbbk} (S_A/\mideal_A))\\
  &=\sum_{k+l=i} \beta_k(J_{\overline A,r}) \beta_l(S_A/\mideal_A)\\
  &=\sum_{k+l=i} \binom{n-p}{r+k} \binom{r+k-1}{k} \binom{p}{l},
\end{align*}
where we use the fact that
$$\beta_k(J_{\overline A,r}) =\binom{n-p}{r+k} \binom{r+k-1}{k}$$ (see,
e.g., \cite[Theorem 2.1]{Ga}).  For
$c=(c_1,c_2,\dots,c_n) \in \mathbb{Z}^n_{\geq 0}$, let
$c!= c_1! c_2! \cdots c_n!$.  For a partition
$\lambda=(\lambda_1,\dots,\lambda_n)$ with $|\lambda|=d \geq 0$, its
\textbf{type} $\type(c)=(t_0,t_1,\dots,t_d)$ is defined by
$t_i=|\{k:\lambda_k=i\}|$.  It is well-known that
$$|\sym_n \cdot x^\lambda|=\frac {n!} {\type(\lambda)!}.$$  Hence, by
Lemma \ref{2.2}(iii)
\begin{align*}
  \beta_i(N^\lambda)
  &=\binom{n}{p} \frac {p!} {\type(\lambda_{\leq p})!} \beta_i(N_{A,m}^\lambda)\\
  &= \sum_{k+l=i} \frac {p!} {\type(\lambda_{\leq p})!} \binom{n}{p} \binom{n-p}{r+k} \binom{r+k-1}{k} \binom{p}{l}.
\end{align*}
Thus, using Theorem \ref{2.4}, we obtain the following formula, which
may be viewed as a more explicit version of the formula given in
Corollary~\ref{cor:betti_shifted}.

\begin{corollary}
  \label{2.5}
  If $I \subset S$ is a symmetric shifted ideal, then
  $$\beta_{i,i+d}(I)
  =\sum_{\lambda \in \Lambda(I), |\lambda|=d} \left( \sum_{k+l=i}
    \frac {p(\lambda)!} {\type(\lambda_{\leq p(\lambda)})!}
    \binom{n}{p(\lambda)} \binom{n-p(\lambda)}{r(\lambda)+k}
    \binom{r(\lambda) + k-1}{k} \binom{p(\lambda)}{l}\right).$$
\end{corollary}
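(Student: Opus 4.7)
The proof essentially assembles the ingredients already established in Sections~\ref{sec:shifted-ideals-have} and~\ref{sec:decomp-symm-shift}. My plan is to reduce the computation of $\beta_{i,i+d}(I)$ via Theorem~\ref{2.4} to a sum of Betti numbers $\beta_{i,i+d}(N^\lambda)$ over $\lambda\in\Lambda(I)$ with $|\lambda|=d$, then use the internal decomposition of each $N^\lambda$ from Lemma~\ref{2.2}(iii) to reduce further to the pieces $N^\lambda_{A,m}$, and finally invoke the explicit tensor description in \eqref{2-2} together with the classical formula for the Betti numbers of $J_{\overline A,r}$ and of the Koszul resolution of $S_A/\mideal_A$.

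More concretely, the first step is to apply Theorem~\ref{2.4}, which gives $\beta_{i,i+d}(I)=\sum_{\lambda\in\Lambda(I),\,|\lambda|=d}\beta_{i,i+d}(N^\lambda)$; since $N^\lambda$ has a linear resolution by Corollary~\ref{2.3}, only the strand $i+d$ contributes and we may write $\beta_i(N^\lambda)$ unambiguously. Next, Lemma~\ref{2.2}(iii) decomposes $N^\lambda$ as an $S$-module direct sum over pairs $(A,m)$ with $A\subset[n]$ of size $p=p(\lambda)$ and $m\in\sym_A\cdot x_A^{\lambda_{\leq p}}$. Counting these pairs: there are $\binom{n}{p}$ choices for $A$ and, for each such $A$, exactly $|\sym_A\cdot x_A^{\lambda_{\leq p}}|=p!/\type(\lambda_{\leq p})!$ choices of $m$, using the standard orbit-count $|\sym_n\cdot x^\lambda|=n!/\type(\lambda)!$ applied inside $S_A$.

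The third step is to compute $\beta_i(N^\lambda_{A,m})$. By the isomorphism in \eqref{2-2}, $N^\lambda_{A,m}\cong (S_A/\mideal_A)\otimes_\Bbbk J_{\overline A,r}$; since the two tensor factors live in polynomial rings on disjoint sets of variables, a minimal free resolution of the tensor product is the tensor product of the minimal free resolutions of each factor, whence the Betti numbers multiply. The Koszul resolution of $S_A/\mideal_A$ gives $\beta_l(S_A/\mideal_A)=\binom{p}{l}$, and the cited formula $\beta_k(J_{\overline A,r})=\binom{n-p}{r+k}\binom{r+k-1}{k}$ from \cite[Theorem~2.1]{Ga} handles the squarefree part. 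Combining yields
\[
\beta_i(N^\lambda_{A,m})=\sum_{k+l=i}\binom{n-p}{r+k}\binom{r+k-1}{k}\binom{p}{l}.
\]

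Finally, multiplying by the number $\binom{n}{p}\cdot p!/\type(\lambda_{\leq p})!$ of direct summands and summing over $\lambda\in\Lambda(I)$ with $|\lambda|=d$ produces exactly the stated formula. There is no real obstacle: every nontrivial ingredient (the Tor decomposition, the splitting of $N^\lambda$, the tensor structure of $N^\lambda_{A,m}$, linearity of the resolutions, and the squarefree Betti formula) has already been proved, so the argument is a careful bookkeeping of the combinatorial factors—the only place to be cautious is verifying that the orbit count $p!/\type(\lambda_{\leq p})!$ correctly indexes the permutations $m$ that arise, which follows directly from the definition of $N^\lambda_{A,m}$.
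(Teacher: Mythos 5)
Your proof is correct and follows essentially the same route as the paper: apply Theorem~\ref{2.4} to reduce to the modules $N^\lambda$, split via Lemma~\ref{2.2}(iii) into $\binom{n}{p}\cdot p!/\type(\lambda_{\leq p})!$ isomorphic copies of $N^\lambda_{A,m}$, and compute $\beta_i(N^\lambda_{A,m})$ from the tensor description \eqref{2-2} together with the Koszul resolution of $S_A/\mideal_A$ and the known Betti numbers of $J_{\overline A,r}$. The only difference is cosmetic ordering of the steps; the combinatorial bookkeeping matches the paper's derivation exactly.
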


\begin{example}
  Let $I=J^{(2)}_{[n],r} (=I^{(2)}_{n,n+1-r})$. Then $I$ is generated
  by two partitions $\lambda=(0^{n-r-1},1^{r+1})$ and
  $\mu =(0^{n-r},2^r)$, where $a^i$ denotes
  $(a,a,\dots,a) \in \mathbb Z^i$.  In this case, $p(\lambda)=0,$
  $r(\lambda)=r+1$, $p(\mu)=n-r$ and $r(\mu)=r$.  Corollary \ref{2.5}
  says
  $$
  \beta_{i,i+r+1}(I)= \sum_{k+l=i} \frac {0!} {0!} \binom{n}{0}
  \binom{n}{r+1+k} \binom{r+k}{k} \binom{0}{l} = \binom{n}{r+1+i}
  \binom{r+i}{i}$$ and
  $$
  \beta_{i,i+2r}(I)= \sum_{k+l=i} \frac {(r+1)!} {(r+1)!}
  \binom{n}{n-r} \binom{n-n+r}{r+k} \binom{r+k-1}{k} \binom{n-r}{l} =
  \binom{n}{n-r} \binom{n-r}{i}.
  $$
  This recovers Corollary \ref{cor:symbolic_square}.
\end{example}

\section{Equivariant Betti numbers}
\label{sec:equiv-betti-numb}

While Corollary \ref{2.5} gives a closed formula for the graded Betti
numbers of symmetric shifted ideals, the formula is not simple.  To
understand these numbers better, we refine the decomposition of
$\Tor_i(I,\Bbbk)$ given in Theorem~\ref{2.4}.  In this section, we
give an explicit description of the $\Bbbk[\sym_n]$-module structure of
$\Tor_i(I,\Bbbk)$ for a symmetric shifted ideal $I$ by using Theorem
\ref{2.4}, and explain how it helps to determine Betti numbers of
these ideals by examples.  We refer the reader to \cite{Sagan} for
some basics on representation theory, such as induced representations
and Specht modules.

For a monomial $m \in S_A$, let $\M_A(m)=\operatorname{span}_{\Bbbk}\{\sigma(m):\sigma \in \sym_A\}$.
We denote by $\operatorname{Ind}_{\sym_k \times \sym_l}^{\sym_{k+l}}(K \boxtimes K')$ the induced representation of the tensor product of a $\Bbbk[\sym_k]$-module $K$ and a $\Bbbk[\sym_l]$-module $K'$.
Let $I$ be a symmetric shifted ideal.
By Theorem \ref{2.4}, we know  $\Tor_i(I,\Bbbk)\cong \bigoplus_{\lambda \in \Lambda(I)} \Tor_i(N^\lambda,\Bbbk)$.
Thus, to understand the $\Bbbk[\sym_n]$-module structure of $\Tor_i(I,\Bbbk)$ it is enough to consider the $\Bbbk[\sym_n]$-module structure of $\Tor_i(N^\lambda,\Bbbk)$.

Let $\lambda \in \P_n$, $p=p(\lambda)$ and $r=r(\lambda)$.  For each
subset $A\subset [n]$ with $|A|=p$, fix a permutation
$\rho_A\in \sym_n$ such that $\rho_A ([p])=A$. The set
$\{\rho_A \in \sym_n: A \subset[n], |A|=p\}$ is a set of
representatives of $\sym_n/(\sym_p \times \sym_{n-p})$.

By Lemma
\ref{2.2}(iii) and \eqref{2-2}, we have an isomorphism (up to shift of
degrees)
\begin{align}
  \nonumber
  N^\lambda
  &= \bigoplus_{A \subset [n],\ |A|=p}
    \bigoplus_{m \in \sym_A \cdot x_A^{\lambda_{\leq p}}} N_{A,m}^\lambda\\
  \nonumber
  & \cong \bigoplus_{A \subset [n],\ |A|=p}
    \left( \bigoplus_{m \in \sym_A \cdot x_A^{\lambda_{\leq p}}} \left( m (S_A/\mideal_A) \otimes_\Bbbk (x_{\overline A})^{\lambda_n-1} J_{\overline A,r} \right) \right)\\
  \nonumber
  & \cong \bigoplus_{A \subset [n],\ |A|=p}
    \left[ \left( \M_A(x_A^{\lambda_{\leq p}}) \otimes_\Bbbk (S_A/\mideal_A) \right) \otimes_\Bbbk (x_{\overline A})^{\lambda_n-1} J_{\overline A,r} \right]\\
  \nonumber
  & \cong \bigoplus_{A \subset [n],\ |A|=p}
    \rho_A \left[ \left( \M_{[p]}(x_{[p]}^{\lambda_{\leq p}}) \otimes_\Bbbk (S_{[p]}/\mideal_{[p]}) \right) \otimes_\Bbbk \left((x_{p+1}\cdots x_n)^{\lambda_n-1} J_{\overline {[p]},r} \right) \right]\\
  \nonumber
  & \cong \operatorname{Ind}_{\sym_p \times \sym_{n-p}}^{\sym_n} \left[\left( \M_{[p]}(x^{\lambda_{\leq p}}_{[p]}) \otimes_{\Bbbk} S_{[p]}/\mideal_{[p]}\right) \boxtimes ((x_{p+1}\cdots x_n)^{\lambda_n-1} J_{\overline{[p]},r})\right]\\
  \label{3-1}
  & \cong \operatorname{Ind}_{\sym_p \times \sym_{n-p}}^{\sym_n} \left[\left( \M_{[p]}(x^{\lambda_{\leq p}}_{[p]}) \otimes_{\Bbbk} S_{[p]}/\mideal_{[p]}\right) \boxtimes J_{\overline{[p]},r} \right],
\end{align}
where
$\M_{[p]}(x^{\lambda_{\leq p}}_{[p]})\otimes_{\Bbbk}
S_{[p]}/\mideal_{[p]} =\Bbbk$ if $p=0$.  Hence, we conclude that
$N^\lambda$ is isomorphic to the module \eqref{3-1} as
$\Bbbk[\sym_n]$-modules.  Note that as an $S_{[p]}$-module,
$\M_{[p]}(x^{\lambda_{\leq p}}_{[p]}) \otimes_\Bbbk
S_{[p]}/\mideal_{[p]}$ is the direct sum of
$|\sym_p \cdot x^{\lambda_{\leq p}}_{[p]}|$ copies of
$S_{[p]}/\mideal_{[p]}$.  Recall that, for an $S_A$-module $N$ and an
$S_{\overline A}$-module $M$, there is an isomorphism
$$\Tor_i^S(N \otimes_\Bbbk M,\Bbbk)\cong \bigoplus_{k+l=i} \Tor_k^{S_A}(N,\Bbbk) \otimes_\Bbbk \Tor_l^{S_{\overline A}}(M,\Bbbk).$$
Then the decomposition in \eqref{3-1} shows that we have an
isomorphism of $\Bbbk[\sym_n]$-modules
$$\Tor^S_i(N^\lambda,\Bbbk) \cong \bigoplus_{k+l=i} \left[
  \operatorname{Ind}_{\sym_p \times \sym_{n-p}}^{\sym_n} \left(
    \M_{[p]} (x^{\lambda_{\leq p}}_{[p]}) \otimes_\Bbbk
    \Tor^{S_{[p]}}_k(S_{[p]}/\mideal_{[p]}) \right) \boxtimes \left(
    \Tor^{S_{\overline{[p]}}}_l (I_{\overline{[p]},r},\Bbbk)
  \right)\right].
$$
Let $S^\lambda$ be the Specht module associated to the partition
$\lambda=(\lambda_1,\dots,\lambda_p)$ with $\lambda_1>0$ (see, e.g.,
\cite[\S 2.3]{Sagan} or \cite[\S 3]{Ga}).  For an integer $l \geq p$,
set
$$U_{l}^\lambda= \operatorname{Ind}_{\sym_p\times\sym_{l-p}}^{\sym_l} S^\lambda \boxtimes S^{(l-p)}.$$
Galetto \cite[Corollary 4.12]{Ga} proved
\begin{equation}
  \label{EqBettiSq}
  \Tor_i^{S_{[n]}}(J_{[n],r},\Bbbk) \cong U_{n}^{(1^i,r)}
\end{equation}
as $\Bbbk[\sym_n]$-modules.  This says
$$\Tor_i^{S_{[p]}}(S_{[p]}/\mideal_{[p]},\Bbbk) \boxtimes
\Tor^{S_{\overline{[p]}}}_l (J_{\overline{[p]},r},\Bbbk) \cong
U_p^{(1^i)} \boxtimes U_{n-p}^{(1^i,r)}
$$
as $\Bbbk[\sym_{p} \times \sym_{n-p}]$-modules.  Combining all these
facts, we get the following.

\begin{proposition}
  \label{3.1}
  Let $\lambda \in \P_n$, $p=p(\lambda)$ and $r=r(\lambda)$.  As
  $\Bbbk[\sym_n]$-modules,
  $$\Tor_i(N^\lambda,\Bbbk) \cong
  \bigoplus_{k+l=i} \left( \operatorname{Ind}_{\sym_p \times
      \sym_{n-p}}^{\sym_n} \left( \M_{[p]}(x^{\lambda_{\leq p}}_{[p]})
      \otimes_\Bbbk U_p^{(1^k)} \right) \boxtimes U_{n-p}^{(1^l,r)}
  \right).$$
\end{proposition}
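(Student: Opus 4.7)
The plan is to apply $\Tor_i^S(-,\Bbbk)$ to the $\Bbbk[\sym_n]$-equivariant isomorphism displayed in equation \eqref{3-1} just before the statement, which already presents $N^\lambda$ as an induced module. The first step is to verify that induction $\operatorname{Ind}_{\sym_p \times \sym_{n-p}}^{\sym_n}$ commutes with $\Tor_i^S(-,\Bbbk)$. This follows from the intermediate description $N^\lambda \cong \bigoplus_A \rho_A(\cdots)$ appearing in the derivation of \eqref{3-1}: induction acts here as a direct sum of pullbacks along the $S$-algebra automorphisms $\rho_A \in \sym_n$, and $\Tor$ respects both direct sums and these automorphisms. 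This reduces the problem to computing the $\Tor$ of the box product inside the brackets of \eqref{3-1}, as a $\Bbbk[\sym_p \times \sym_{n-p}]$-module.

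Next I would invoke the Künneth-type splitting recalled in the paragraph preceding the proposition: since $S = S_{[p]} \otimes_\Bbbk S_{\overline{[p]}}$ and the module is a $\Bbbk$-tensor product of an $S_{[p]}$-module with an $S_{\overline{[p]}}$-module, its $S$-Tor splits equivariantly as a direct sum over $k+l=i$ of box products of individual Tor modules. Then I would identify each factor. For the $S_{\overline{[p]}}$-side, equation \eqref{EqBettiSq} gives $\Tor_l^{S_{\overline{[p]}}}(J_{\overline{[p]},r},\Bbbk) \cong U_{n-p}^{(1^l,r)}$ as $\Bbbk[\sym_{n-p}]$-modules. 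For the $S_{[p]}$-side, the finite-dimensional $\Bbbk$-vector space $\M_{[p]}(x_{[p]}^{\lambda_{\leq p}})$ carries no nontrivial $S_{[p]}$-action, so since the tensor is over $\Bbbk$ it passes through $\Tor$, leaving
$$\M_{[p]}(x_{[p]}^{\lambda_{\leq p}}) \otimes_\Bbbk \Tor_k^{S_{[p]}}(S_{[p]}/\mideal_{[p]},\Bbbk).$$

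The Koszul complex computes $\Tor_k^{S_{[p]}}(S_{[p]}/\mideal_{[p]},\Bbbk) \cong \wedge^k(\Bbbk x_1 \oplus \cdots \oplus \Bbbk x_p)$ as $\Bbbk[\sym_p]$-modules, where the right-hand side is the $k$-th exterior power of the natural permutation representation. A standard identification then recognizes this exterior power as $U_p^{(1^k)} = \operatorname{Ind}_{\sym_k \times \sym_{p-k}}^{\sym_p}(S^{(1^k)} \boxtimes S^{(p-k)})$ by matching the basis $e_{i_1} \wedge \cdots \wedge e_{i_k}$ with the coset representative indexed by the $k$-subset $\{i_1,\dots,i_k\}$ and the sign of a rearrangement. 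Reassembling the Künneth summands inside the induction yields precisely the claimed formula.

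The main obstacle I anticipate is verifying the equivariance of the pieces, above all the identification $\wedge^k \Bbbk^p \cong U_p^{(1^k)}$ as $\Bbbk[\sym_p]$-modules and the analogous care needed when pulling $\M_{[p]}(x_{[p]}^{\lambda_{\leq p}})$ through $\Tor$. Both are standard once the Koszul resolution is chosen equivariantly, but keeping track of which actions are intertwined at each step — especially the interaction between permutation of variables and permutation of cosets in the inductions — is the subtle part.
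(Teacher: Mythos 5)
Your proposal follows the same route as the paper: pass the Künneth decomposition and the induced-module description \eqref{3-1} through $\Tor$, then identify the factors via Galetto's result \eqref{EqBettiSq} and the Koszul resolution of $S_{[p]}/\mideal_{[p]}$. You simply spell out more explicitly two steps the paper treats implicitly (compatibility of induction with $\Tor$ via the $\rho_A$-twists, and the identification $\wedge^k\Bbbk^p\cong U_p^{(1^k)}$), but the argument is the same.
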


We note that $\M(x^\lambda)$ is isomorphic to a $\Bbbk[\sym_n]$-module
known as a permutation module \cite[\S 2.1]{Sagan}.

\begin{theorem}
  \label{3.2}
  Let $I$ be a symmetric shifted ideal.  Then as
  $\Bbbk[\sym_n]$-modules
  $$\Tor_i(I,\Bbbk)_{i+d}
  \cong \bigoplus_{\substack{\lambda \in \Lambda(I) \\ |\lambda|=d}}
  \bigoplus_{k+l=i}\left( \operatorname{Ind}_{\sym_{p(\lambda)} \times
      \sym_{n-p(\lambda)}}^{\sym_n} \left(
      \M_{[p(\lambda)]}(x^{\lambda_{\leq p(\lambda)}}_{[p (\lambda)]})
      \otimes_\Bbbk U_{p(\lambda)}^{(1^k)} \right) \boxtimes
    U_{n-p(\lambda)}^{(1^l,r(\lambda))} \right).$$
\end{theorem}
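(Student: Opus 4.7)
The plan is to derive Theorem \ref{3.2} as an essentially immediate consequence of Theorem \ref{2.4} and Proposition \ref{3.1}, which together have already carried out all of the substantive work. The only task remaining is a careful assembly of the two results.

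First, I would invoke Theorem \ref{2.4} to obtain the $\Bbbk[\sym_n]$-equivariant decomposition
$$\Tor_i(I,\Bbbk)_{i+d} \cong \bigoplus_{\lambda \in \Lambda(I),\ |\lambda|=d} \Tor_i(N^\lambda,\Bbbk).$$
I would then point out that, by Corollary \ref{2.3}, each $N^\lambda$ has a linear resolution and is generated in degree $|\lambda|=d$, so the entire graded module $\Tor_i(N^\lambda,\Bbbk)$ is concentrated in internal degree $i+d$. Hence the summand $\Tor_i(N^\lambda,\Bbbk)$ on the right coincides with $\Tor_i(N^\lambda,\Bbbk)_{i+d}$, and no information is lost or introduced by omitting the internal degree subscript.

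Next, I would substitute into each summand the explicit representation-theoretic description provided by Proposition \ref{3.1}, namely
$$\Tor_i(N^\lambda,\Bbbk) \cong \bigoplus_{k+l=i} \left( \operatorname{Ind}_{\sym_{p(\lambda)} \times \sym_{n-p(\lambda)}}^{\sym_n} \left( \M_{[p(\lambda)]}(x^{\lambda_{\leq p(\lambda)}}_{[p(\lambda)]}) \otimes_\Bbbk U_{p(\lambda)}^{(1^k)} \right) \boxtimes U_{n-p(\lambda)}^{(1^l,r(\lambda))} \right),$$
where the construction depends on $\lambda$ only through $p(\lambda)$, $r(\lambda)$, and $\lambda_{\leq p(\lambda)}$. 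Substituting this into the decomposition from the first step produces verbatim the formula in the statement of Theorem \ref{3.2}.

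The main obstacle in this argument has already been overcome upstream. Theorem \ref{2.4} required the construction of the filtration of $I$ by the $\sym_n$-fixed subideals $I_{\leq k}$, the identification $I_{\leq k}/I_{\leq k-1} \cong N^{\lambda^{(k)}}$ of Lemma \ref{2.3.1}, and the linearity of the resolutions to split the long exact Tor sequences into the desired direct sum. Proposition \ref{3.1} depended in turn on the direct-sum decomposition of $N^\lambda$ into submodules $N^\lambda_{A,m}$ and on Galetto's equivariant computation of $\Tor_i^S(J_{[n],r},\Bbbk)$. With these two results in hand, the proof of Theorem \ref{3.2} is purely bookkeeping: one simply ensures that the indexing of $\lambda$, $p(\lambda)$, $r(\lambda)$, and $\lambda_{\leq p(\lambda)}$ is carried through consistently.
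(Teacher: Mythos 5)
Your proposal is correct and matches the paper's (implicit) argument: the paper states Theorem \ref{3.2} with no separate proof precisely because it is the direct combination of Theorem \ref{2.4} with Proposition \ref{3.1}, exactly as you describe. Your added observation that $\Tor_i(N^\lambda,\Bbbk)$ is concentrated in internal degree $i+|\lambda|$ (via the linearity from Corollary \ref{2.3}) is the right justification for dropping the internal-degree subscript when substituting.
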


In the rest of this section, we explain how Theorem \ref{3.2} is
useful to write down Betti numbers of symmetric shifted ideals.  To do
this, we identify $S^\lambda$ with the Ferrers diagram corresponding
to partition $\lambda$.  Also, for simplicity, we write
$$\operatorname{Ind}_{\sym_p \times \sym_{n-p}}^{\sym_n} N \boxtimes M=
N \boxtimes M \text{ and }
\operatorname{Ind}_{\sym_p \times \sym_{n-p}}^{\sym_n} N \boxtimes
S^{(n-p)}=N_{\uparrow n}.$$  By Theorem \ref{3.2}, the
$\Bbbk[\sym_n]$-module structure of $\Tor(N^\lambda,\Bbbk)$ only
depends on $p(\lambda),r(\lambda)$ and $\lambda_{\leq p(\lambda)}$. We
write
$$\operatorname{info}(\lambda)=(p(\lambda),r(\lambda),\lambda_{\leq
  p(\lambda)}).$$

\begin{example}
  Let $I=J_{[n],r} \subset \Bbbk[x_1,\dots,x_n]$ be the monomial ideal
  generated by all squarefree monomials of degree $r$.  As we already
  mentioned in \eqref{EqBettiSq}, we have
  $$\Tor_i(I,\Bbbk) \cong U_n^{(1^i,r)}$$
  for all $i$. Here we check that our formula in Theorem \ref{3.2}
  coincides with this.  In this case, $\Lambda(I)=\{(0^{n-r},1^r)\}$.
  Let $\lambda=(0^{n-r},1^r)$.  Then since
  $\operatorname{info}(\lambda)=(0,r,\emptyset)$, we have
  $$\bigoplus_{k+l=i} \left( \M_{[p(\lambda)]}(x^{\lambda_{\leq p(\lambda)}})
    \otimes_\Bbbk U_{p(\lambda)}^{(1^k)} \right) \boxtimes
  U_{n-p(\lambda)}^{(1^l,r(\lambda))} = U_{n}^{(1^i,r)}$$ and
  Theorem \ref{3.2} yields
  $$\Tor_i(I,\Bbbk) \cong \Tor_i(N^\lambda,\Bbbk)
  \cong U_n^{(1^i,r)}.$$
\end{example}

Graded Betti numbers of an $S$-module $N$ are often presented by a
Betti table, i.e., the table whose $(i,j)$-th entry is
$\beta_{i,i+j}(N)$.

For a module $N^\lambda$ and a symmetric shifted
ideal, we present their graded Betti numbers by the table whose
$(i,j)$-th entry is the $\Bbbk[\sym_n]$-module given in Theorem
\ref{3.2}.  We call such table an \textbf{equivariant Betti table}.

For example, the equivariant Betti table of $I_{6,3}$ is
\begin{center}
  \medskip{}
  \begin{tabular}{c|ccccccccc}
    & 0 & 1 & 2 & 3\\
    \hline
    \\
    3 & $\ydiagram{3}_{\uparrow 6}$ & $\ydiagram{1,3}_{\uparrow 6}$ & $\ydiagram{1,1,3}_{\uparrow 6}$
                & $\ydiagram{1,1,1,3}$
  \end{tabular}
  \medskip{}
\end{center}

\begin{example}
  \label{ex2}
  Let $I=J_{[n],r}^{(2)}$ be the second symbolic power of the
  squarefree Veronese ideal with $n\geq r+1$.  Then
  $\Lambda(I)=\{\lambda, \mu\}$, where $\lambda=(0^{n-r-1},1^{r+1})$
  and $\mu=(0^{n-r},2^r)$.  Using
  $\operatorname{info}(\lambda)=(0,r+1,\emptyset)$ and
  $\operatorname{info}(\mu)=(n-r,r,(0^{n-r}))$, we obtain
  $$\Tor_i(N^\lambda,\Bbbk)
  \cong U_n^{(1^i,r+1)}$$ and
  $$
  \Tor_i(N^{\mu},\Bbbk) \cong \bigoplus_{k+l=i} \left(\left(
      \M_{[n-r]}(x^{(0^{n-r})}) \otimes U_{n-r}^{(1^k)}\right) \boxtimes
    U_r^{(1^l,r)}\right) =U_{n-r}^{(1^i)} \boxtimes U_r^{(r)}
  =(U_{n-r}^{(1^i)})_{\uparrow n}.$$
  The equivariant Betti table of
  $N^\lambda$ and $N^\mu$ when $n=6$ and $r=3$ are:
  \begin{center}
    \medskip{}
    \begin{tabular}{c|ccccccccc}
      $N^\lambda$& 0 & 1 & 2 \\
      \hline
      \\
      4 & $\ydiagram{4}_{\uparrow 6}$ & $\ydiagram{1,4}_{\uparrow 6}$ & $\ydiagram{1,1,4}$
    \end{tabular}
    \bigskip{}

    \begin{tabular}{c|ccccccccc}
      $N^\mu$ & 0 & 1 & 2 & 3\\
      \hline
      \\
      6 & $\left(\emptyset_{\uparrow 3}\right)_{\uparrow 6}$ & $\left(\ydiagram{1}_{\uparrow 3}\right)_{\uparrow 6}$ & $\left(\ydiagram{1,1}_{\uparrow 3}\right)_{\uparrow 6}$
                      & $\ydiagram{1,1,1}_{\uparrow 6}$
    \end{tabular}
    \medskip{}
  \end{center}
  The equivariant Betti table of $I$ is given by the sum of the two
  tables above as follows:
  \begin{center}
    \medskip{}
    \begin{tabular}{c|ccccccccc}
      $I$ & 0 & 1 & 2 & 3\\
      \hline
      \\
      4 & $\ydiagram{4}_{\uparrow 6}$ & $\ydiagram{1,4}_{\uparrow 6}$ & $\ydiagram{1,1,4}$
      \\
      \\
      6 & $\left(\emptyset_{\uparrow 3}\right)_{\uparrow 6}$ & $\left(\ydiagram{1}_{\uparrow 3}\right)_{\uparrow 6}$ & $\left(\ydiagram{1,1}_{\uparrow 3}\right)_{\uparrow 6}$
                      & $\ydiagram{1,1,1}_{\uparrow 6}$
      \\
    \end{tabular}
    \medskip{}
  \end{center}
\end{example}

\begin{example}
  \label{ex3}
  Let $I=I_{n,r}^{(3)}$ be the third symbolic power of the squarefree
  Veronese ideal with $n\geq r+2$.  Then
  $\Lambda(I)=\{\lambda,\mu,\rho\}$ with
  $\lambda=(0^{n-r-2},1^{r+2}), \mu=(0^{n-r-1},1,2^r),\rho=(0^{n-r},3^r).$
  Using that
  $\operatorname{info}(\lambda)=(0,r+2,\emptyset),
  \operatorname{info}(\mu)=(n-r-1,r,(0^{n-r-1}))$ and
  $ \operatorname{info}(\rho)=(n-r,r,(0^{n-r}))$, we have
  \begin{align*}
    \Tor_i(N^\lambda,\Bbbk) &\cong U_n^{(1^i,r+2)}\\
    \Tor_i(N^\mu,\Bbbk) &\cong \bigoplus_{k+l=i}  (\M_{[n-r-1]}(x^{(0^{n-r-1})}) \otimes U_{n-r-1}^{(1^k)} )\boxtimes U_{r+1}^{(1^l,r)} \\
                            &= \left( U_{n-r-1}^{(1^i)} \boxtimes U_{r+1}^{(r)} \right) \bigoplus \left ( U_{n-r-1}^{(1^{i-1})} \boxtimes U_{r+1}^{(1,r)} \right),\\
    \Tor_i(N^\rho,\Bbbk) & \cong \bigoplus_{k+l=i}
                           (\M_{[n-r]}(x^{(0^{n-r})}) \otimes U_{n-r}^{(1^k)} )\boxtimes U_{r}^{(1^l,r)}
                           = U_{n-r}^{(1^i)} \boxtimes U_r^{(r)}=(U_{n-r}^{(1^i)})_{\uparrow n}.
  \end{align*}
  The equivariant Betti table of $I$ is the sum of the equivariant
  Betti table of $N^\lambda$, $N^\mu$ and $N^\rho$. The following
  tables are the equivariant Betti tables of these three modules when
  $n=6$ and $r=3$.
  \begin{center}
    \medskip{}
    \begin{tabular}{c|ccccccccc}
      $N^\lambda$& 0 & 1 \\
      \hline
      \\
      5 & $\ydiagram{5}_{\uparrow 6}$ & $\ydiagram{1,5}$
    \end{tabular}
    \bigskip{}

    \begin{tabular}{c|ccccccccc}
      $N^\mu$ & 0 & 1 & 2 & 3\\
      \hline
      \\
              &
                $\emptyset_{\uparrow 2} \boxtimes \ydiagram{3}_{\uparrow 4}$
                  & $\ydiagram{1}_{\uparrow 2} \boxtimes \ydiagram{3}_{\uparrow 4}$
                      & $\ydiagram{1,1} \boxtimes \ydiagram{3}_{\uparrow 4}$ \\
      7 & & $\oplus$ & $\oplus$
      \\&&
           $ \emptyset_{\uparrow 2} \boxtimes \ydiagram{1,3}$
                  &
                    $ \ydiagram{1}_{\uparrow 2} \boxtimes \ydiagram{1,3}$
                      &
                        $ \ydiagram{1,1} \boxtimes \ydiagram{1,3}$
    \end{tabular}
    \bigskip{}

    \begin{tabular}{c|ccccccccc}
      $N^\rho$ & 0 & 1 & 2 & 3\\
      \hline
      \\
      9 & $\left(\emptyset_{\uparrow 3}\right)_{\uparrow 6}$ & $\left(\ydiagram{1}_{\uparrow 3}\right)_{\uparrow 6}$ & $\left(\ydiagram{1,1}_{\uparrow 3}\right)_{\uparrow 6}$
                           & $\ydiagram{1,1,1}_{\uparrow 6}$
    \end{tabular}
    \medskip{}
  \end{center}
\end{example}

\begin{example}
  \label{ex4}
  Let $I=(x_1,\dots,x_n)^3 \subset \Bbbk[x_1,\dots,x_n]$ with
  $n \geq 3$. Then $\Lambda(I)=\{\lambda,\mu,\rho\}$ with
  $\lambda=(0^{n-3},1^3),\mu=(0^{n-2},1,2),\rho=(0^{n-1},3).$ A
  computation similar to Example \ref{ex4} shows that the equivariant
  Betti tables of $N^\lambda$, $N^\mu$ and $N^\rho$ are \hspace{30pt}
  \begin{center}
    \medskip{}
\begin{tabular}{c|ccccccccc}
$N^\lambda$& 0 & 1 & 2 & $\cdots$ & $n-2$ & $n-1$ \\
\hline
\\
3 & $\ydiagram{3}_{\uparrow n}$ & $\ydiagram{1,3}_{\uparrow n}$ &  $\ydiagram{1,1,3}_{\uparrow n}$ &
 $\cdots$ &
$\begin{ytableau}
\\
\none[]\vspace{-5pt}
\\
\none[\vdots]\\
\\
 &  &
\end{ytableau}
_{\uparrow n}$ &$\begin{ytableau}
\\
\none[]\vspace{-5pt}\\
\none[\vdots]\\
\\
\\
 &  &
\end{ytableau}$
\end{tabular}
\bigskip{}

\begin{tabular}{c|ccccccccc}
$N^\mu$ & 0 & 1 & 2 & $\cdots$ & $n-2$ & $n-1$\\
\hline
\\
&
$\emptyset_{\uparrow n-2} \boxtimes \ydiagram{1}_{\uparrow 2}$
& $\ydiagram{1}_{\uparrow n-2} \boxtimes \ydiagram{1}_{\uparrow 2}$
& $\ydiagram{1,1}_{\uparrow n-2} \boxtimes \ydiagram{1}_{\uparrow 2}$
& $\cdots$ &
$\begin{ytableau}
\\
\none[]\vspace{-5pt}\\
\none[\vdots]\\
\\
\\
\end{ytableau} \boxtimes \ydiagram{1}_{\uparrow 2}$\\
3&&$\oplus$&$\oplus$&&$\oplus$
\\
&&
$\emptyset_{\uparrow n-2} \boxtimes \ydiagram{1,1}$
&
$\ydiagram{1}_{\uparrow n-2} \boxtimes \ydiagram{1,1}$
&
$\cdots$
&
$
\begin{ytableau}
\\
\none[]\vspace{-5pt}\\
\none[\vdots]\\
\\
\end{ytableau}_{\uparrow n-2} \hspace{-15pt} \boxtimes \ydiagram{1,1}_{\uparrow 2}$
&
$
\begin{ytableau}
\\
\none[]\vspace{-5pt}\\
\none[\vdots]\\
\\
\\
\end{ytableau} \boxtimes \ydiagram{1,1}$
\end{tabular}
\bigskip{}

\begin{tabular}{c|ccccccccc}
$N^\rho$ & 0 & 1 & 2 & $\cdots$ &  $n-1$\\
\hline
\\
3 & $(\emptyset_{\uparrow n-1})_{\uparrow n}$ & $(\ydiagram{1}_{\uparrow n-1})_{\uparrow n}$ & $\left(\ydiagram{1,1}_{\uparrow n-1}\right)_{\uparrow n}$
& $\cdots$ &
$\begin{ytableau}
\\
\\
\none[]\vspace{-5pt}\\
\none[\vdots]
\\
\\
\end{ytableau}_{\uparrow n}$
\end{tabular}
\medskip{}
\end{center}
\end{example}

\begin{example}
  Let $I=(I_{n,2})^2$ with $n \geq 4$.  Then
  $\Lambda(I)=\{\lambda,\mu,\rho\}$ with
  $$\lambda=(0^{n-4},1^4),\mu=(0^{n-3},1^2,2),\rho=(0^{n-2},2^2)$$
  and
  \begin{align*}
    \Tor_i(N^\lambda,\Bbbk) &\cong U_n^{(1^i,4)},\\
    \Tor_i(N^\mu,\Bbbk) &\cong \bigoplus_{k+l=i} \left(\M_{[n-3]} (x^{(0^{n-3})})\otimes U_{n-3}^{(1^k)} \right) \boxtimes U_3^{(1^{l+1})}\\
                            & \cong
                              \left(U_{n-3}^{(1^i)} \boxtimes U_3^{(1)} \right)
                              \bigoplus
                              \left(U_{n-3}^{(1^{i-1})} \boxtimes U_3^{(1^2)} \right)
                              \bigoplus
                              \left(U_{n-3}^{(1^{i-2})} \boxtimes U_3^{(1^3)} \right),
    \\
    \Tor_i(N^\rho,\Bbbk) &\cong U_{n-2}^{(1^i)} \boxtimes U_2^{(2)}=(U_{n-2}^{(1^i)})_{\uparrow n}.
  \end{align*}
\end{example}

\section{Other considerations}
\label{sec:other-considerations}

\subsection*{Weakly polymatroidal ideals}
Our definition of symmetric shifted ideals is inspired by stable
monomial ideals, which also have linear quotients (see \cite[\S
7]{HH}), but almost all stable monomial ideals are not fixed by an
action of the symmetric group.
Besides stable monomial ideals, another
famous class of monomial ideals which have linear quotients are
(weakly) polymatroidal ideals (see \cite[\S 12]{HH} for more details).
A monomial ideal $I \subset S$ is said to be \textbf{weakly
  polymatroidal} if for any two monomials
$u=x_1^{a_1} \cdots x_n^{a_n}$ and
$v=x_1^{b_1}\cdots x_n^{b_n} \in G(I)$ such that
$a_1=b_1, \dots,a_{t-1}=b_{t-1}$ and $a_t>b_t$ for some $t$, there is
$j>t$ such that $v(x_t/x_j) \in I$.

One may wonder whether
$I^{(m)}_{n,c}$ is a weakly polymatroidal ideal and the fact that it
has linear quotients follows from the weakly polymatroidal property.
The next example shows this is not the case.

\begin{example}
  Consider the ideal $I=I_{6,3}^{(5)}$ which we also studied in
  Example \ref{6:3:5}.  Recall that this ideal is generated by the
  $\sym_6$-orbits of the following five monomials
  \begin{equation}
    \label{generators6:3:5}
    x_1x_2^2x_3^2x_4^2x_5^2x_6^2,\quad
    x_1x_2x_3^3x_4^3x_5^3x_6^3,\quad
    x_2^2x_3^3x_4^3x_5^3x_6^3,\quad
    x_2x_3^4x_4^4x_5^4x_6^4,\quad
    x_3^5x_4^5x_5^5x_6^5.
  \end{equation}
  Then the two monomials
  $$u=x_1^{a_1} \cdots x_6^{a_6}=x_1^7x_2^4x_3^4x_4^4x_5^1x_6^0 \mbox{ and }
  v=x_1^{b_1} \cdots x_6^{b_6}=x_1^5x_2^5x_3^5x_4^5x_5^0x_6^0$$ are
  contained in $I$.  Clearly $a_1>b_1$, but for any $j>1$ the monomial
  $v(x_1/x_j)$ must belong to the $\sym_6$-orbit of
  $x_3^4x_4^5x_5^5x_6^6$.  However, the monomial
  $x_3^4x_4^5x_5^5x_6^6$ is not divisible by any monomial listed in
  \eqref{generators6:3:5}, so $I$ is not weakly polymatroidal.
\end{example}

\subsection*{Open questions}
Finally, we give a few open problems relating to symmetric shifted
ideals.  We give a formula for (equivariant) Betti numbers of
symmetric shifted ideals, but we could not construct their minimal
graded free resolutions.  On the other hand, an explicit $\sym_n$-equivariant
minimal graded free resolutions of $I_{n,c}$ is constructed in \cite{Ga}.

\begin{problem}
  Construct explicit $\sym_n$-equivariant minimal graded free resolutions of
  symmetric shifted ideals.
\end{problem}

Symmetric shifted ideals give a class of $\sym_n$-fixed monomial
ideals having linear resolutions. However, we do not know if there is
an $\sym_n$-fixed monomial ideal which is not shifted but has a linear
resolution.  This prompts the following:

\begin{problem}
\label{last_problem}
  Find a combinatorial characterization of $\sym_n$-fixed monomial
  ideals having linear resolutions.
\end{problem}

\begin{remark}
  After this paper was posted on arXiv, Claudiu Raicu \cite{Ra19} gave
  an answer to Problem \ref{last_problem}.  He proves that if an
  $\sym_n$-fixed monomial ideal has a linear resolution then it must
  be a symmetric shifted ideal.  In particular, Theorem \ref{thm:1}
  and his result imply that an $\sym_n$-fixed monomial ideal has
  linear quotients if and only if it is a symmetric shifted ideal.
\end{remark}

\end{document}